\newcommand{\R}{\mathbb{R}}
\newcommand{\x}{{\bf x}}
\newtheorem{Theorem}{Theorem}[section]
\newtheorem{Lemma}{Lemma}[section]
\newtheorem{Corollary}{Corollary}[section]
\newtheorem{Remark}{Remark}[section]
\newtheorem*{Assumption*}{Assumption}
\newtheorem{Problem}{Problem}[section]
\newtheorem*{Problem*}{Problem}
\numberwithin{equation}{section}
\begin{document}

\title{The time dimensional reduction method to determine the initial conditions without the knowledge of damping coefficients}

\author{
Thuy T. Le\thanks{%
Department of Mathematics, NC State University, Raleigh, NC 27695, USA, \texttt{tle9@ncsu.edu}} 
\and Linh V. Nguyen\thanks{Department of Mathematics, University of Idaho, 875 Perimeter Drive, Moscow, ID, 83844, USA \texttt{lnguyen@uidaho.edu}.} \and Loc H. Nguyen\thanks{Department of Mathematics and Statistics, University of North Carolina at
Charlotte, Charlotte, NC, 28223, USA, \texttt{loc.nguyen@charlotte.edu}.} 
\and
Hyunha Park\thanks{Department of Mathematics, George Washington University, 801 22nd St. NW, Washington, DC 20052, USA, \texttt{hpark0@gwu.edu}.}
}

%\affil[]{Department of Mathematics and Statistics, University of North Carolina at
%Charlotte, Charlotte, NC 28223, USA}

\date{}
\maketitle
\begin{abstract}
	This paper aims to reconstruct the initial condition of a hyperbolic equation with an unknown damping coefficient. Our approach involves approximating the hyperbolic equation's solution by its truncated Fourier expansion in the time domain and using a polynomial-exponential basis. This truncation process facilitates the elimination of the time variable, consequently, yielding a system of quasi-linear elliptic equations. 
%This step suggests the name of the approach: ``time	dimensional reduction method."
	To globally solve the system  without needing an accurate initial guess, we employ the Carleman contraction principle. We provide several numerical examples to illustrate the efficacy of our method. The method not only delivers precise solutions but also showcases remarkable computational efficiency.
\end{abstract}

\noindent{\it Key words:
	time reduction,
	Carleman contraction mapping,
	initial condition,
	damping coefficient
}

\noindent{\it AMS subject classification: 	

}

\section{Introduction} \label{sec intr}

Let $T$ be a positive number that represents the final time and let $d \geq 1$ be the spatial dimension.
Let $u: \R^d \times (0, T) \to \R$ be the solution of
\begin{equation}
	\left\{
		\begin{array}{ll}
			u_{tt}(\x, t) + a(\x) u_t(\x, t)  = \Delta u(\x, t) &(\x, t) \in \R^d \times (0, T)			
			\\
			u(\x, 0) = f(\x) &\x \in \R^d,\\										u_t(\x, 0) = -a(\x) f(\x) &\x \in \R^d. 
		\end{array}
	\right.
	\label{main_eqn}
\end{equation}
We are interested in the following inverse problem.
\begin{Problem} \label{P:main}
	Let $\Omega$ be a bounded domain of $\R^d$ with a smooth boundary.
	Assume that $|f(\x)| > 0$ for all $\x \in \Omega$.
	 Given the measurement of lateral data
	\begin{equation}
		p(\x, t) = u(\x, t)
		\quad
		\mbox{and}
		\quad
		q(\x, t) = \partial_{\nu} u(\x, t)
		\label{data}		
	\end{equation}
	for all $(\x, t) \in \partial \Omega \times (0, T),$
	determine the function $f(\x)$ for $\x \in \Omega$.
	\label{p}
\end{Problem}

Problem \ref{p}  is an important problem arising from bio-medical imaging, called thermo/photo-acoustics tomography (see, e.g., \cite{Krugeretal:mp1995, Krugerelal:mp1999, Oraevskyelal:ps1994,XuWang:rew2006,KuchKun:ejam2008,Kuchment:siam2013}).
One sends non-ionizing laser pulses or microwave to a biological tissue under inspection (for instance, woman's breast in mamography). A part of the energy will be absorbed and converted into heat, causing a thermal expansion and a subsequence ultrasonic wave propagating in space. The ultrasonic pressures $u$ on a surface around the tissue are measured.  Although the Neumann data $\partial_\nu u$ are not directly measured in the experiment, one can find it by solving the external hyperbolic equation \cite{FPR:siam2004}. Finding the initial pressure $f$ from these measurements yields helpful structural information of the tissue.
%The current techniques to solve the problem of thermo/photo-acoustics tomography are only for linear  hyperbolic equation.
%We list here some widely used methods.
Most of the current publications focus on standard models with non-damping and isotropic media. The methods include explicit reconstruction formulas in \cite{FPR:siam2004, FHR:siam2007,Kunyansky:inv2007,Haltmeier:cma2013, Natterer:ipi2012, Linh:ipi2009}, the time reversal method \cite{  Hristova:ip2009, HristovaKuchmentLinh:ip2006, KatsnelsonNguyen:aml2018, Stefanov:ip2009, Stefanov:ip2011}, the quasi-reversibility method \cite{ClasonKlibanov:sjsc2007, LeNguyenNguyenPowell:JOSC2021} and the iterative methods \cite{Paltaufetal:osa2002,Huangetal:IEEE2013, Paltaufetal:ip2007,Belhachmi:inv2016,HaltmeierNguyen:siam2017}.
The reader can find publications about thermo/photo-acoustics tomography for more sophisticated model involving a damping term or attenuation term \cite{Acosta:jde2018, 
Ammarietal:cm2011, Ammarielal:sp2012,  Burgholzer:pspie2007, Haltmeier:jmiv2019, HaltmeierKN:inv2017, Homan:ipi2013, Kowar:SISI2014, Kowar:sp2012, Nachman1990, NguyenKlibanov:ip2022}. 

The model under investigation \eqref{main_eqn} and Problem~\eqref{P:main} were studied in \cite{Homan:ipi2013,Palacios:inv2016,Haltmeier:jmiv2019,Palacios:ipi2022}. However, in those works, the absorption coefficient $a(x)$ was known. In this paper, in contrast, we assume that $a(x)$ is unknown. Since our focus is the inverse problem, we assume that \eqref{data} has a unique solution $u(\x, t)$. Assume further that this solution is bounded; i.e. there is an $M > 0$ such that
\begin{equation}
	|u(\x, t)| < M
	\quad 
	\mbox{for all }
	(\x, t) \in \R^d \times (0, T).
	\label{1.3}
\end{equation}
Solving Problem \ref{p} when $a$ is known is possible, see \cite{Homan:ipi2013,Palacios:inv2016,Haltmeier:jmiv2019,Palacios:ipi2022}. However, the problem becomes challenging and interesting when $a$ is not known.
\begin{enumerate}
\item Regarding ``challenging",
the challenge at hand stems from the necessity of computing two unknown functions, $a$ and $u$ while there is only one single governing equation, the hyperbolic equation in \eqref{main_eqn}. Additionally, the product $a(\x) u_t(\x, t)$ in \eqref{main_eqn} adds nonlinearity to Problem \ref{p}. Solving nonlinear problems without providing a good initial guess poses an intriguing and scientifically significant challenge for the community.
We propose to use a recently developed method, the Carleman contraction mapping method, that quickly delivers reliable solutions without requesting such a good initial guess, see \cite{Le:CONN2023, LeNguyen:jiip2022, Nguyen:AVM2023}. 
This new method is designed based on the fixed-point iteration, the contraction principle, and a suitable Carleman estimate.
	\item Regarding ``interesting", in real-world applications, the function $a(\x)$ is typically unknown as it represents the value of the damping coefficient at an internal point $\x$ in $\Omega$ where one has no access. Therefore, being able to solve Problem \ref{p} without requesting the knowledge of this internal data is a substantial contribution to the field. To the best of our knowledge, this is the first work that tackles this problem. A similar problem, which is to reconstruct the initial pressure with unknown sound speed has been studied theoretically (see \cite{HristovaKuchmentLinh:ip2006,Stefanov:ams2013,Stefanov:ipi2013,Liu:inv2015}) and numerically (e.g., \cite{Zhang:ppu2006,Matthews:siims2018}). However, theoretically sound numerical approach for this problem is still out-of-reach. 
\end{enumerate}

Due to the lack of knowledge of the coefficient $a$, Problem \ref{p} becomes nonlinear.
 Conventional approaches to computing solutions to nonlinear inverse problems typically rely on optimization techniques. However, these methods are local in nature, meaning they yield solutions only if good initial approximations of the true solutions are provided. Even in this case, local convergence is not guaranteed unless certain additional conditions are met. For a condition ensuring the local convergence of the optimization method employing Landweber iteration, we direct the reader to \cite{Hankeetal:nm1995}. 
 There is a general framework to globally solve nonlinear inverse problems, called convexification. 
The main idea of the convexification method is to include some suitable Carleman weight functions into the mismatch cost functionals, making these mismatch functionals uniformly convex.
The convexified phenomenon is rigorously proved by employing the well-known Carleman estimates.
Several versions of the convexification method  \cite{KlibanovNik:ra2017, KhoaKlibanovLoc:SIAMImaging2020, Klibanov:sjma1997, Klibanov:nw1997, Klibanov:ip2015,  KlibanovKolesov:cma2019, KlibanovLiZhang:ip2019, KlibanovLeNguyenIPI2022,  KlibanovLiZhang:SIAM2019, LeNguyen:JSC2022} have
been developed since it was first introduced in \cite{KlibanovIoussoupova:SMA1995}. 
Especially, the convexification was successfully tested with experimental data in \cite{VoKlibanovNguyen:IP2020, Khoaelal:IPSE2021, KlibanovLeNguyenIPI2022} for the inverse scattering problem in the frequency domain given only backscattering data.
We consider the convexification method as the first generation of numerical methods based on Carleman estimates to solve nonlinear inverse problems.
Although effective, the convexification method has a drawback. 
It is time-consuming. 
We, therefore, propose to apply the Carleman contraction mapping method, see \cite{Le:CONN2023, LeNguyen:jiip2022, Nguyen:AVM2023}. 
The strength of the Carleman contraction mapping method includes global and fast convergence; i.e., this method can provide reliable numerical solutions without requesting a good initial guess and the rate of the convergence is $O(\theta^n)$ where $\theta \in (0, 1)$ and $n$ is the number of iterations.
For more details about these strengths, we refer the reader to \cite{Nguyen:AVM2023}.

The Carleman contraction mapping methods developed in \cite{Le:CONN2023, LeNguyen:jiip2022, Nguyen:AVM2023} are suitable for solving nonlinear elliptic equations given Cauchy boundary data. However, the governing equation for Problem \ref{p} is hyperbolic. Hence, to Problem \ref{p} into the framework of the Carleman contraction mapping method, we must reduce the time dimension.
To achieve this, we express the function $u(\x, t)$, for $(\x, t) \in \Omega \times (0, T)$, through its Fourier coefficients $u_1(\x), u_2(\x), \dots $, where $\x \in \Omega$. These coefficients are related to the polynomial-exponential basis of $L^{2}(0, T)$ introduced in \cite{Klibanov:jiip2017}. Using straightforward algebra, we derive an approximate model consisting of a system of elliptic PDEs for these Fourier coefficients. As a result, the time dimension is reduced, and the Carleman contraction mapping method can be applied.
This process suggests our approach the name: ``the time dimensional reduction method." Another benefit of this method is its more efficient computational cost, as we are now dealing with a $d$ dimensional problem instead of a $d + 1$ dimensional one.

The paper is organized as follows. In Section \ref{sec2}, we introduced the time reduction model.
In Section \ref{sec3}, we recall a version of the Carleman contraction mapping method and its convergence.
In Section \ref{sec4}, we present several numerical results.
%Section \ref{sec5} is for concluding remarks.

\section{An approximate model}\label{sec2}
In this section, we derive a system of nonlinear partial differential equations. 
The solution to this system directly yields the solution to Problem \ref{p}.
It follows from the initial conditions in \eqref{main_eqn} $u(\x, 0) = f(\x)$ and $u_t(\x, 0) = - a(\x) f(\x)$ that 
\begin{equation}
	a(\x) = -\frac{u_t(\x, 0)}{u(\x, 0)}
	\sim -\frac{u_t(\x, 0)u(\x, 0)}{|u(\x, 0)|^2 + \eta^2},
	\quad 
	\x \in \Omega
	\label{2,1}
\end{equation}
for a fixed regularization number $0 < \eta \ll 1.$

\begin{Remark}
The replacement of $-\frac{u_t(\x, 0)}{u(\x, 0)}$ by its regularized version $-\frac{u_t(\x, 0)u(\x, 0)}{|u(\x, 0)|^2 + \eta^2}$  in \eqref{2,1} is necessary. More precisely, our rate of convergence depends on $\eta$. In computation, this approximation also prevents a situation where the denominator of the fraction $-\frac{u_t(\x, 0)}{u(\x, 0)}$ becomes zero for some values of $\x$. 
\end{Remark}

Substituting \eqref{2,1} into the governing hyperbolic equation in \eqref{main_eqn}, we derive the following approximate equation
\begin{equation}
u_{tt}(\x, t) 
	 -\frac{u_t(\x, 0)u(\x, 0)}{|u(\x, 0)|^2 + \eta^2} u_t(\x, t) - \Delta u(\x, t) = 0 
	\quad
	(\x, t) \in \R^d \times (0, T).
	\label{main}
\end{equation}
To address Problem \ref{p}, we compute the solution for \eqref{main} given the lateral data of the function $u$ on $\partial \Omega \times (0, T)$. 
Once we obtain the solution $u(\x, t)$ for $(\x, t) \in \Omega \times (0, T)$ to \eqref{main}, we can set the required function $f(\x)$ as $u(\x, 0)$ for $\x \in \Omega.$ However, given that \eqref{main} is nonlocal and nonlinear, finding its solution is extremely challenging.
Currently, an efficient numerical method to handle this task is not yet developed.
We only demonstrate a numerical solver for the following approximation where the time variable and the nonlocal terms are eliminated.
The first step in removing the time dimension is to cut off the Fourier series of $u(\x, t)$ with respect to an appropriate basis of $L^2(0, T)$. 
We choose the polynomial-exponential basis $\{\Psi_n\}_{n \geq 1}$ originally introduced in \cite{Klibanov:jiip2017}.
The set $\{\Psi_n\}_{n \geq 1}$ is constructed as follows.
For any $t \in (0, T)$, we define $\phi_n(t) = t^{n - 1}e^t$. It is clear that the set $\{\phi_n\}_{n \geq 1}$ is complete in $L^2(0, T)$. 
By applying the Gram-Schmidt orthonormalization process to this set, we obtain an orthonormal basis for $L^2(0, T)$, which is denoted by $\{\Psi_n\}_{n \geq 1}$.

\begin{Remark}
	The polynomial-exponential basis set $\{\Psi_n\}_{n \geq 1}$ was initially introduced in \cite{Klibanov:jiip2017} as a tool to solve inverse problems.
We have demonstrated its effectiveness by employing to numerous inverse problems of nearly all types of equations. This includes elliptic equations \cite{VoKlibanovNguyen:IP2020,
Khoaelal:IPSE2021,
KhoaKlibanovLoc:SIAMImaging2020,
ThuyKhoaKlibanovLocBidneyAstratov:2023, NguyenVu:cm2023},
parabolic equations \cite{HaoThuyLoc:2023, Le:CONN2023, LeNguyen:jiip2022},
hyperbolic equations \cite{LeNguyenNguyenPowell:JOSC2021, NguyenNguyenTruong:camwa2022},
transport equations \cite{KlibanovNguyen:ip2019}, and full radiative transfer equation \cite{KlibanovAlexeyNguyen:SISC2019}.
In addition, we have used this basis to solve the critical task of differentiating noisy data, as described in \cite{NguyenLeNguyenKlibanov:arxiv2023}.
\end{Remark}
 
For $(\x, t) \in \Omega \times (0, T)$, we approximate
\begin{equation}
	u(\x, t) = \sum_{n = 1}^\infty u_n(\x) \Psi_n(t) \approx \sum_{n = 1}^N u_n(\x) \Psi_n(t)
	\label{2.1}
\end{equation}
for some cut-off number $N$, chosen later, where
\begin{equation}
	u_n(\x) = \int_{0}^T u(\x, t) \Psi_n(t)dt, \quad n \geq 1.	
	\label{2.3}	
\end{equation}
Substituting \eqref{2.1} into the governing equation into \eqref{main} gives
\begin{equation}
	\sum_{n = 1}^N u_n(\x) \Psi_n''(t) 
	-\frac{\big[\sum_{l = 1}^N u_l(\x) \Psi_l'(0)\big]\big[\sum_{l = 1}^N u_l(\x) \Psi_l(0)\big]}{\big|\sum_{l = 1}^N u_l(\x) \Psi_l(0)\big|^2 +\eta^2} \sum_{n = 1}^N u_n(\x) \Psi_n'(t) 
	- \sum_{n = 1}^N \Delta  u_n(\x) \Psi_n(t)  = 0
	\label{2.4}
\end{equation}
for $(\x, t) \in \Omega \times (0, T).$
For each $m \in \{1, \dots, N\},$ multiply $\Psi_m(t)$ to both sides of \eqref{2.4} and then integrate the resulting equation. We obtain
\begin{multline}
	 \sum_{n = 1}^N u_n(\x) \int_0^T \Psi_n''(t) \Psi_m(t) dt 
	-\frac{\big[\sum_{l = 1}^N u_l(\x) \Psi_l'(0)\big]\big[\sum_{l = 1}^N u_l(\x) \Psi_l(0)\big]}{\big|\sum_{l = 1}^N u_l(\x) \Psi_l(0)\big|^2 +\eta^2} \sum_{n = 1}^N u_n(\x) \int_0^T \Psi_n'(t) \Psi_m(t) dt
	\\
	- \sum_{n = 1}^N \Delta  u_n(\x)\int_0^T \Psi_n(t)\Psi_m(t) dt  = 0
	\label{2.5}
\end{multline}
for $\x \in \Omega.$
Define the vector $U = (u_1, \dots, u_N)^{\rm T}$. Since $\{\Psi_n\}_{n \geq 1}$ is an orthonormal basis of $L^2(0, T)$, we can deduce from \eqref{2.5} that
\begin{equation}
	\Delta U(\x) -  SU(\x)= F(\x, U(\x))
	\quad
	\mbox{for all } \x \in \Omega
	\label{2.6}
\end{equation}
where the matrix $S$ is given by
\begin{equation}
	S = (s_{mn})_{m, n = 1}^N =  \Big(\int_0^T \Psi_n''(t) \Psi_m(t)dt\Big)_{m, n = 1}^N
\end{equation}
and the function $F: \Omega \times \R^N \to \R^N$ is defined as 
\begin{equation}
	F(\x, U(\x)) = 
	-\frac{\big[\sum_{l = 1}^N u_l(\x) \Psi_l'(0)\big]\big[\sum_{l = 1}^N u_l(\x) \Psi_l(0)\big]}{\big|\sum_{l = 1}^N u_l(\x) \Psi_l(0)\big|^2 +\eta^2} \sum_{n = 1}^N u_n(\x) \int_0^T \Psi_n'(t) \Psi_m(t) dt.
	\label{2,9}
\end{equation}
Due to \eqref{2.1} and the boundedness of $u(\x, t)$, see \eqref{1.3}, the vector $U$ is bounded, say, there is a positive number ${\bf M}$ depending only on $M,$ $N$,  $\{\Psi_n\}_{n = 1}^N$ and $T$ such that
\begin{equation}
	|U(\x)| \leq {\bf M}
	\quad
	\mbox{for all } \x \in \Omega.
	\label{boundednessU}
\end{equation}

On the other hand, it follows from \eqref{data} and \eqref{2.3} that
\begin{equation}
	U(\x) = \Big(\int_0^T p(\x, t) \Psi_m(t)dt\Big)_{m = 1}^N,
	\quad
	\mbox{and }
	\quad
	\partial_{\nu} U(\x) = \Big(\int_0^T q(\x, t) \Psi_m(t)dt\Big)_{m = 1}^N
\end{equation}
for all $\x \in \partial \Omega$.
So, we have derived the following time-reduction model 
\begin{equation}
	\left\{
		\begin{array}{ll}
			\Delta U(\x) -  S U(\x) = F(\x, U(\x)) &\x \in \Omega \\
			U(\x) = \Big(\int_0^T p(\x, t) \Psi_m(t)dt\Big)_{m = 1}^N &\x \in \partial \Omega,\\
			\partial_{\nu} U(\x) = \Big(\int_0^T q(\x, t) \Psi_m(t)dt\Big)_{m = 1}^N &\x \in \partial \Omega.
		\end{array}
	\right.
	\label{2.10}
\end{equation}

\begin{Remark}[The validity of the approximation model \eqref{2.10}]
	Due to the truncation in \eqref{2.1}, and the term-by-term differentiation to obtain \eqref{2.4}, problem \eqref{2.10} is not precise. 	
	Proving the convergence of this model as $N \to \infty$ is extremely challenging.
	Since this paper focuses on computation, we do not address this issue here. 
	Instead, we assume that \eqref{2.10} well-approximates the model for the Fourier coefficients $U(\x) = (
	\begin{array}{rrr}
		u_1(\x) & \dots& u_N(\x)
	\end{array})^{\rm T}$ of the function $u(\x, t)$.
	Although the validity of this approximation model is not theoretically proven, we numerically observe its strength.
	In fact,
	similar approximations were successfully applied in \cite{VoKlibanovNguyen:IP2020, Khoaelal:IPSE2021, KhoaKlibanovLoc:SIAMImaging2020, ThuyKhoaKlibanovLocBidneyAstratov:2023} in which we solved the highly nonlinear and severely ill-posed inverse scattering problem with backscattering data experimentally measured by microwave facilities built at the University of North Carolina at Charlotte.
	The successful applications of several versions of this approximation with highly noisy simulated data can be found at \cite{KlibanovNguyen:ip2019,  LeNguyenNguyenPowell:JOSC2021, NguyenNguyenTruong:camwa2022, NguyenVu:cm2023, KlibanovAlexeyNguyen:SISC2019}.
\end{Remark}

\begin{Remark}[The choice of the basis $\{\Psi_n\}_{n \geq 1}$]
The use of the basis $\{\Psi_n\}_{n \geq 1}$ is crucial to the efficacy of our method.  One may question why we have chosen this specific basis out of countless alternatives for the Fourier expansion in \eqref{2.1}. The answer lies in the limitations of more common bases like Legendre polynomials or trigonometric functions. These bases typically commence with a constant function, whose derivatives are identically zero. As a result, the corresponding Fourier coefficient $u_{1}(\x)$ in the sums $ \sum_{n = 1}^N u_{n}(x) \Psi_n'(t)$ in \eqref{2.3} is overlooked, causing some inaccuracy of the outcome. The basis $\{\Psi_n\}_{n \geq 1}$ is suitable for \eqref{2.3} as it fulfills the necessary condition wherein the derivatives of $\Psi_n$, for $n \geq 1$, are not constantly zero.
The effectiveness of this basis is well-documented in various research, such as in \cite{LeNguyenNguyenPowell:JOSC2021, NguyenLeNguyenKlibanov:arxiv2023}. 
In the study \cite{LeNguyenNguyenPowell:JOSC2021}, we use the basis $\{\Psi_n\}_{n \geq 1}$ and a traditional trigonometric basis to expand wave fields and address problems in photo-acoustic and thermo-acoustic tomography. Our results indicated a notably superior performance from the polynomial-exponential basis. In \cite{NguyenLeNguyenKlibanov:arxiv2023}, we employed Fourier expansion to compute the derivatives of data affected by noise. Our findings confirmed that the basis $\{\Psi_n\}_{n \geq 1}$ outperformed the trigonometric basis in terms of accuracy when differentiating term-by-term of the Fourier expansion in solving ill-posed problems.
\end{Remark}

\begin{Remark}
The task of solving Problem \ref{p} is reduced to the problem of computing solution to \eqref{2.10}.
In fact, let $U^{\rm comp} = (u_1^{\rm comp}, \dots, u_N^{\rm comp})^{\rm T}$ denote the computed solution to \eqref{2.10}. Then, since $f(\x) = u(\x, 0)$ and due to \eqref{2.1}, we set the desired solution to Problem \ref{p} as
\begin{equation}
	f^{\rm comp}(\x) = \sum_{n = 1}^N u^{\rm comp}_n(\x) \Psi_n(0)
	\quad
	\mbox{for } \x \in \Omega.
	\label{2.13}
\end{equation}
\label{rem24}
\end{Remark}

We recall the Carleman contraction method to solve \eqref{2.10} in the next section. Some versions of this method can be found in \cite{Le:CONN2023, LeNguyen:jiip2022, Nguyen:AVM2023}.

\section{The Carleman contraction method}\label{sec3}

Some versions of the Carleman contraction method, which were established in \cite{Le:CONN2023, LeNguyen:jiip2022, Nguyen:AVM2023}, rely on Carleman estimates. For the reader's ease of reference, we briefly recall the version in \cite{Nguyen:AVM2023} here we will apply it to solve \eqref{2.10}.
%
%\begin{Lemma}[Carleman estimate]
%Fix a point $\x_0 \in \R^d \setminus \Omega$. Define $r(\x) = |\x - \x_0|$ for all $\x \in \Omega.$
%	Let $b > \max_{\x \in \overline \Omega} r(\x)$ be a fixed constant.
%	There exist positive constants $\beta_0$  depending only on $b$, $\x_0$, $\Omega$ and $d$ such that
%	for all function $v \in C^2(\overline \Omega)$ satisfying 
%	 \[
%	 	v(\x) = \partial_{\nu} v(\x) = 0  \quad \mbox{for all } \x \in \partial \Omega,
%	 \]
%	the following estimate holds true
%\begin{multline}
%	\int_{\Omega} e^{2\lambda b^{-\beta} r^{\beta}(\x)}|\Delta v(\x)|^2 d\x
%	\geq 
%%	\frac{C}{\lambda \beta^{7/4} b^{-\beta}} \int_{\Omega}e^{2\lambda b^{-\beta} r^\beta(\x)} r^{2\beta}(\x)|D^2v(\x)|^2 d\x
%%	\\
%	%\hspace{-1cm}
%	%+ 
%		C \lambda^3 \beta^4 b^{-3 \beta} \int_{\Omega} r^{2\beta}(\x) e^{2\lambda b^{-\beta} r^{\beta}}|v(\x)|^2 d\x
%	\\
%	+ C \lambda \beta^{1/2} b^{-\beta}\int_{\Omega} e^{2\lambda b^{-\beta} r^{\beta}(\x)} |\nabla v(\x)|^2 d\x
%	\label{Car est}
%\end{multline}
%	for $\beta \geq \beta_0$ and $\lambda \geq \lambda_0$.
%		Here, $D^2 v = (v_{x_i x_j} )_{i, j = 1}^d$ is the Hessian matrix of $v$, $\lambda_0 = \lambda_0(b, \Omega, d, \x_0) > 1$ is a positive number with $\lambda_0 b^{-\beta} \gg 1$ and $C = C(b, \Omega, d, \x_0) > 1$ is a constant.  These numbers depend only on listed parameters.
%	 \label{carleman estimate 1}
%\end{Lemma}	

\begin{Lemma}[Carleman estimate]
Fix a point $\x_0 \in \R^d \setminus \Omega$. Define $r(\x) = |\x - \x_0|$ for all $\x \in \Omega.$
	Let $b > \max_{\x \in \overline \Omega} r(\x)$ be a fixed constant.
There exist  positive constants $\beta$ depending only on $\x_0$, $\Omega$, $\Lambda,$  and $d$ such that for all function $v \in C^2(\overline \Omega)$ satisfying
	\begin{equation}
		v(\x) = \partial_{\nu} v(\x) = 0 \quad \mbox{for all } 
		\x \in \partial \Omega,
		\label{3.1}
	\end{equation}
	the following estimate holds true
	\begin{equation}
		\int_{\Omega} e^{2\lambda r^{-\beta}(\x)}\vert{\rm Div}(A \nabla v) \vert^2d\x
		\geq
		 C\lambda  \int_{\Omega}  e^{2\lambda r^{-\beta}(\x)}\vert\nabla v(\x)\vert^2\,d\x
		+ C\lambda^3  \int_{\Omega}   e^{2\lambda r^{-\beta}(\x)}\vert v(\x)\vert^2\,d\x	
		\label{Car est}	
	\end{equation}
	for all $\lambda \geq \lambda_0$. 
	Here, $\lambda_0 = \lambda_0( \x_0,  \Omega, A, d, \beta)$ and $C = C( \x_0,  \Omega, A, d, \beta) > 0$ depend only on the listed parameters.
	\label{lemma carl}
\end{Lemma}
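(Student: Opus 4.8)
The plan is to prove \eqref{Car est} as a standard weighted $L^2$ Carleman estimate for the divergence-form elliptic operator ${\rm Div}(A\nabla\,\cdot)$, following the classical conjugation-and-splitting scheme. Write $\psi(\x)=r^{-\beta}(\x)$ for the weight exponent, so the Carleman weight is $e^{2\lambda\psi}$. Since $\x_0\notin\overline\Omega$, the radius $r(\x)=|\x-\x_0|$ is smooth and bounded below by a positive constant on $\overline\Omega$; hence $\psi\in C^\infty(\overline\Omega)$ with $\nabla\psi=-\beta r^{-\beta-1}\nabla r$ nowhere vanishing, and the constant $b>\max_{\overline\Omega}r$ just pins the range of $r$ (and hence of the weight) to a fixed compact interval, which is used only to fix constants. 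I would begin with the substitution $w=e^{\lambda\psi}v$, which by \eqref{3.1} and the smoothness of $\psi$ satisfies the homogeneous Cauchy conditions $w=\partial_\nu w=0$ on $\partial\Omega$, and compute the conjugated operator $Pw:=e^{\lambda\psi}\,{\rm Div}\big(A\nabla(e^{-\lambda\psi}w)\big)$, a second-order operator in $w$ whose coefficients are polynomials in $\lambda$ of degree at most two.

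Next I would decompose $P=P_++P_-$ into its formally self-adjoint part $P_+$ (carrying the principal second-order term and the $\lambda^2$ zeroth-order term) and its skew-adjoint part $P_-$ (carrying the order-$\lambda$ first-order term), and expand $\|Pw\|_{L^2(\Omega)}^2=\|P_+w\|^2+\|P_-w\|^2+2(P_+w,P_-w)_{L^2(\Omega)}$. The essential work is to integrate the cross term $2(P_+w,P_-w)$ by parts: after integration its leading second-order contributions assemble into a divergence whose integral reduces to boundary integrals over $\partial\Omega$, while the remainder is a volume integral of a quadratic form in $w$ and $\nabla w$ with $\lambda$-dependent coefficients. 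Because $w=\partial_\nu w=0$ on $\partial\Omega$, all boundary contributions vanish — which is precisely why \eqref{3.1} is assumed. (Equivalently, this step can be phrased as a pointwise differential inequality $e^{2\lambda\psi}|{\rm Div}(A\nabla v)|^2\ge C\lambda\,e^{2\lambda\psi}|\nabla v|^2+C\lambda^3 e^{2\lambda\psi}|v|^2+{\rm Div}(V)$ with a remainder divergence field $V$ that integrates to zero under \eqref{3.1}.)

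The heart of the argument, and the step I expect to be the main obstacle, is to show that for $\beta$ chosen large enough (depending on $\x_0,\Omega,\Lambda,d$) the quadratic form produced by the cross term is bounded below by $C\lambda|\nabla w|^2+C\lambda^3|w|^2$ modulo terms absorbable into $\|P_+w\|^2+\|P_-w\|^2$. This is the pseudoconvexity (subellipticity) condition for $\psi$ relative to ${\rm Div}(A\nabla\,\cdot)$. For the radial choice $\psi=r^{-\beta}$ the gradient points radially and the Hessian has a dominant positive term of size $\beta^2 r^{-\beta-2}$ in the radial direction; taking $\beta$ large makes the requisite principal-symbol inequality hold uniformly on $\overline\Omega$, with the ellipticity and the $C^1$ bounds on $A$ (encoded in $\Lambda$) controlling the error terms. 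Once this pointwise lower bound is integrated, I would fix $\lambda_0$ so large that the genuinely positive terms of orders $\lambda$ and $\lambda^3$ dominate all lower-order remainders.

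Finally I would convert back from $w$ to $v$. Since $\nabla w=e^{\lambda\psi}(\nabla v+\lambda v\nabla\psi)$ and $\nabla\psi$ is bounded above and below on $\overline\Omega$, the weighted quantities $\int_\Omega e^{2\lambda\psi}|\nabla v|^2$ and $\int_\Omega e^{2\lambda\psi}|v|^2$ are recovered from $\int_\Omega(|\nabla w|^2+\lambda^2|w|^2)$ up to constants that are absorbed for $\lambda\ge\lambda_0$, yielding \eqref{Car est} with $C$ and $\lambda_0$ depending only on the stated parameters $\x_0,\Omega,A,d,\beta$.
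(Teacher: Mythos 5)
The paper does not actually prove Lemma \ref{lemma carl}: it imports the estimate wholesale, citing Lemma 5 of \cite{MinhLoc:tams2015} and the detailed proof in Lemma 2.1 of \cite{LeNguyenTran:CAMWA2022} (with an alternative derivation from \cite{Lavrentiev:AMS1986} mentioned). So your plan is not ``the same as the paper's proof''; rather, it reconstructs the argument that lives in those references. On that score your route is the right one, and it is essentially the route those references take: conjugate with $w=e^{\lambda\psi}v$ for $\psi=r^{-\beta}$, split the conjugated operator into symmetric and antisymmetric parts, expand the square, integrate the cross term by parts, and kill the boundary terms using \eqref{3.1} (note that $v=\partial_\nu v=0$ on $\partial\Omega$ forces the \emph{full} gradient $\nabla w$ to vanish there, which is what you need, since tangential derivatives alone would not suffice). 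The cited proofs are in fact written in the pointwise form you mention parenthetically --- an explicit inequality $e^{2\lambda\psi}|{\rm Div}(A\nabla v)|^2\ge C\lambda e^{2\lambda\psi}|\nabla v|^2+C\lambda^3e^{2\lambda\psi}|v|^2+{\rm Div}(V)$ with an explicit vector field $V$ --- so the two formulations you offer are genuinely the same computation organized differently.

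The caveat is that your write-up is a proof \emph{plan}, and the step you yourself flag as ``the heart of the argument'' --- verifying that for $\beta$ large the commutator quadratic form is bounded below by $C\lambda|\nabla w|^2+C\lambda^3|w|^2$ --- is asserted, not carried out. That verification is the entire content of the lemma: one must actually compute the Hessian contribution $\beta(\beta+1)r^{-\beta-2}\,\nabla r\otimes\nabla r-\beta r^{-\beta-1}D^2r$, check that the positive radial term of size $\beta^2r^{-\beta-2}$ dominates the indefinite tangential and lower-order pieces uniformly on $\overline\Omega$ once $\beta$ is large (using $r\ge{\rm dist}(\x_0,\Omega)>0$ and $r\le b$), and track how the ellipticity bounds on $A$ enter the error terms. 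Without that computation the proof is incomplete; with it, your argument would reproduce \cite[Lemma 2.1]{LeNguyenTran:CAMWA2022}. Two minor points worth fixing: you should require $\x_0\notin\overline\Omega$ (not merely $\x_0\in\R^d\setminus\Omega$) so that $r$ is bounded away from zero and $\psi$ is smooth up to the boundary, which is how the paper actually uses the lemma (e.g.\ $\x_0=(0,5.5)$ for $\Omega=(-1,1)^2$); and you should state explicitly the structural hypotheses on $A$ (symmetry, uniform ellipticity with constant $\Lambda$, $C^1$ coefficients), which the paper's statement leaves implicit.
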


Lemma \ref{lemma carl} can be straightforwardly derived from \cite[ Lemma 5]{MinhLoc:tams2015}. For a detailed exposition of the proof, we direct the reader to \cite[Lemma 2.1]{LeNguyenTran:CAMWA2022}.
Another approach to derive \eqref{Car est}, using a different Carleman weight function, involves the application of the Carleman estimate from \cite[Chapter 4, Section 1, Lemma 3]{Lavrentiev:AMS1986} for generic parabolic operators.
The methodology to derive \eqref{Car est} via \cite[Chapter 4, Section 1, Lemma 3]{Lavrentiev:AMS1986} resembles the one in \cite[Section 3]{LeNguyenNguyenPowell:JOSC2021}, albeit with the Laplacian swapped out for the operator ${\rm Div} (A\nabla \cdot)$.
We would like to specifically highlight to the reader the varied forms of Carleman estimates for all three types of differential operators (elliptic, parabolic, and hyperbolic) and their respective applications in inverse problems and computational mathematics \cite{BeilinaKlibanovBook, BukhgeimKlibanov:smd1981, KlibanovLiBook, LocNguyen:ip2019}.
Additionally, it is noteworthy that certain Carleman estimates remain valid for all functions $v$ that satisfy $v\vert_{\partial \Omega} = 0$ and $\partial_{\nu} v\vert_{\Gamma} = 0$, where $\Gamma$ constitutes a portion of $\partial \Omega$. Examples can be seen in \cite{KlibanovNguyenTran:JCP2022, NguyenLiKlibanov:2019}. These Carleman estimates are applicable to the resolution of quasilinear elliptic PDEs given the data on only a part of $\partial \Omega$.

We will seek the solution $U$ to \eqref{2.10} in 
the set of admissible solutions
\begin{multline}
	H = \Big\{h \in H^s(\Omega)^N: 
		\|h\|_{L^{\infty}(\Omega)} \leq {\bf M},  
		h|_{\partial \Omega} = \Big(\int_0^T p(\x, t) \Psi_m(t)dt\Big)_{m = 1}^N
		\\
	\mbox{ and }		
		  \partial_{\nu} h|_{\partial \Omega} = \Big(\int_0^T q(\x, t) \Psi_m(t)dt\Big)_{m = 1}^N	
	\Big\}
	\label{H}
\end{multline}
where $s$ is an integer with $s > \lceil d/2 \rceil + 2$ and ${\bf M}$ is the number in \eqref{boundednessU}.
Throughout the paper, we assume that $H$ is nonempty.
Let $\beta$ and $\lambda_0$ be the numbers in Lemma \ref{lemma carl}.
Recall the set $H$ as in \eqref{H}.
For $\lambda > \lambda_0,$
we define the map $\Phi_{\lambda}: H \to H$
\begin{equation}
	\Phi_{\lambda, \epsilon}(V) = \underset{\varphi \in H}{\min} 
	\int_{\Omega} e^{2\lambda  r^{-\beta}(\x)}\big|\Delta \varphi - S\varphi - F(\x, V(\x))\big|^2d\x
	+ \epsilon \|\varphi\|_{H^s(\Omega)^N}^2
	\label{3.4}
\end{equation}
where $\epsilon > 0$ is a regularization parameter.
The map $\Phi_\lambda$ is well-defined.
In fact,
for each $V \in H$, the functional
\[
	J_{\lambda, \epsilon}(V): H \to \R, \quad \varphi \mapsto \int_{\Omega} e^{2\lambda  r^{-\beta}(\x)}\big|\Delta \varphi - S\varphi - F(\x, V(\x))\big|^2d\x
	+ \epsilon \|\varphi\|_{H^s(\Omega)^N}^2
\]
is strictly convex. It has a unique minimizer on a closed and convex set $H$ of $H^s(\Omega)^N$.
We refer the reader to \cite[Remark 3.1]{Nguyen:AVM2023} for more details. A similar argument for the well-posedness of the map $\Phi_\lambda$ can be found in \cite[Theorem 4.1]{Le:CONN2023}.

\begin{Remark}[The Carleman quasi-reversibility method]
   Consider a vector-valued function $V$ in $H$. Define $\varphi$ as $\Phi_{\lambda, \epsilon}(V)$.
As $\varphi \in H$ minimizes the function $J_{\lambda, \epsilon}(V)$, it can be informally said that computing $\varphi$ is about finding a solution for the following system of equations:
    \begin{equation}
    \left\{
        \begin{array}{ll}
            e^{\lambda r^{-\beta}(\x)}[\Delta  \varphi  - S\varphi - F(\x, V(\x))] = 0 &\x \in \Omega,  \\
             \varphi(\x) = \Big(\int_0^T p(\x, t) \Psi_m(t)dt\Big)_{m = 1}^N &\x \in \partial \Omega,\\
            \partial_{\nu} \varphi(\x)  =\Big(\int_0^T q(\x, t) \Psi_m(t)dt\Big)_{m = 1}^N &\x \in \partial \Omega.
        \end{array}
    \right.
    \label{main_eqn1}
\end{equation}
Due to the presence of the regularization term $\epsilon \|\varphi\|_{H^s(\Omega)}^2$, we refer to $\varphi$ as the regularized solution of problem \eqref{main_eqn1}.
The technique for determining the regularized solution to the linear equation \eqref{main_eqn1} by minimizing $J_{\lambda, \epsilon}(V)$ is named the Carleman quasi-reversibility method. The name of this method is suggested by the existence of the Carleman weight function in the formulation of $J_{\lambda, \epsilon}(V)$, as well as the use of the quasi-reversibility technique to address linear PDEs with Cauchy data. We refer the reader to \cite{LattesLions:e1969} for original work regarding the quasi-reversibility method.
    \label{rem_quasi}
\end{Remark}

For $\epsilon > 0$ and $\lambda > \lambda_0$, define the norm
\begin{equation}
    \|\varphi\|_{\lambda, \epsilon} = 
    \Big(\int_{\Omega} e^{2\lambda r^{-\beta}(\x)}(|\varphi|^2 + |\nabla \varphi|^2) d\x
    \Big)^{1/2} + \frac{\epsilon}{\lambda} \|\varphi\|_{H^s(\Omega)^N}
    \label{norm}
\end{equation}
for all $\varphi \in H^s(\Omega)^N.$
The following result holds. 
\begin{Theorem}
	Let $\x_0$, $\beta$, and $\lambda_0$ be as in Lemma \ref{lemma carl}. Then,
	there is a number $C$ depending only on  $\x_0,$ $\Omega,$ $\beta$, $T$, $\{\Psi_n\}_{n = 1}^N$, ${\bf M}$, $\eta$ and $d$ such that for all $\lambda > \lambda_0$
    \begin{equation}
        \Vert\Phi(u) - \Phi(v)\Vert_{\epsilon, \lambda} \leq \sqrt{\frac{C}{\lambda}} \Vert u - v\Vert_{\epsilon, \lambda}
        \label{contraction}
    \end{equation}
    for all $u, v \in H^s(\Omega)^N.$
    \label{thm31}
\end{Theorem}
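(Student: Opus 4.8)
The plan is to derive \eqref{contraction} from three ingredients: the strict convexity of the functional defining $\Phi$ in \eqref{3.4}, a Lipschitz bound on the nonlinearity $F$ from \eqref{2,9}, and the Carleman estimate of Lemma \ref{lemma carl}. Write $\varphi_1 = \Phi(u)$, $\varphi_2 = \Phi(v)$, and $W = \varphi_1 - \varphi_2$; note that $W$ vanishes together with $\partial_\nu W$ on $\partial\Omega$, since both $\varphi_1,\varphi_2 \in H$ carry the same Cauchy data, so \eqref{3.1} holds for each component of $W$. First I would record the first-order optimality conditions. Because $\varphi_1$ minimizes the strictly convex functional $J_{\lambda,\epsilon}(u)$ over the convex set $H$, for every $\chi\in H$ the directional derivative along $\chi-\varphi_1$ is nonnegative, that is,
\[
\int_\Omega e^{2\lambda r^{-\beta}}\big(\Delta\varphi_1 - S\varphi_1 - F(\x,u)\big)\cdot\big(\Delta(\chi-\varphi_1) - S(\chi-\varphi_1)\big)\,d\x + \epsilon\,\langle\varphi_1,\chi-\varphi_1\rangle_{H^s(\Omega)^N}\ge 0,
\]
and the analogous inequality holds for $\varphi_2$ with $u$ replaced by $v$. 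Choosing $\chi=\varphi_2$ in the first and $\chi=\varphi_1$ in the second and adding, the cross terms telescope and I obtain the master inequality
\[
\int_\Omega e^{2\lambda r^{-\beta}}\big|\Delta W - SW\big|^2\,d\x + \epsilon\|W\|_{H^s(\Omega)^N}^2 \le \int_\Omega e^{2\lambda r^{-\beta}}\big(F(\x,u)-F(\x,v)\big)\cdot\big(\Delta W - SW\big)\,d\x.
\]

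Next I would estimate the right-hand side. Young's inequality splits it into $\tfrac12\int_\Omega e^{2\lambda r^{-\beta}}|\Delta W - SW|^2\,d\x$, which is absorbed into the left-hand side, plus $\tfrac12\int_\Omega e^{2\lambda r^{-\beta}}|F(\x,u)-F(\x,v)|^2\,d\x$. The crucial analytic input is that $F(\x,\cdot)$ is Lipschitz with a constant $L$ depending only on ${\bf M}$, $\eta$, $T$, and $\{\Psi_n\}_{n=1}^N$: the denominator in \eqref{2,9} is bounded below by $\eta^2$, while on the set where $|U|\le{\bf M}$ the numerator and the remaining factors are smooth with bounded derivatives, so $|F(\x,u)-F(\x,v)|\le L|u-v|$ pointwise. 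This bounds the remaining term by $\tfrac12 L^2\int_\Omega e^{2\lambda r^{-\beta}}|u-v|^2\,d\x$, which is in turn controlled by $\tfrac12 L^2\|u-v\|_{\lambda,\epsilon}^2$ in view of \eqref{norm}.

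It remains to bound the left-hand side from below. I would apply the Carleman estimate \eqref{Car est} (with $A=\Id$, so that ${\rm Div}(A\nabla\cdot)=\Delta$) to each scalar component of $W$ and sum, obtaining a lower bound on $\int_\Omega e^{2\lambda r^{-\beta}}|\Delta W|^2\,d\x$ by $C\lambda\int e^{2\lambda r^{-\beta}}|\nabla W|^2 + C\lambda^3\int e^{2\lambda r^{-\beta}}|W|^2$. To pass from $\Delta W$ to $\Delta W - SW$ I would use $|\Delta W - SW|^2\ge\tfrac12|\Delta W|^2 - |SW|^2$ together with $|SW|\le\|S\|\,|W|$, and then absorb the $|SW|^2$ contribution into the dominant $\lambda^3$-term once $\lambda$ is large. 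This produces $\int_\Omega e^{2\lambda r^{-\beta}}|\Delta W - SW|^2\,d\x \ge C\lambda\int_\Omega e^{2\lambda r^{-\beta}}\big(|\nabla W|^2+|W|^2\big)\,d\x$ for all $\lambda$ beyond a threshold.

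Combining the three steps yields $C\lambda\int_\Omega e^{2\lambda r^{-\beta}}(|\nabla W|^2+|W|^2)\,d\x + \epsilon\|W\|_{H^s(\Omega)^N}^2 \le \tfrac12 L^2\|u-v\|_{\lambda,\epsilon}^2$. To conclude I would rebuild $\|W\|_{\lambda,\epsilon}^2$ from \eqref{norm}: using $(a+b)^2\le 2a^2+2b^2$, the weighted $H^1$ part is bounded by the first term divided by $\lambda$, while the regularization part $\tfrac{\epsilon^2}{\lambda^2}\|W\|_{H^s(\Omega)^N}^2$ is handled by writing it as $\tfrac{\epsilon}{\lambda^2}\big(\epsilon\|W\|_{H^s(\Omega)^N}^2\big)$ and invoking $\epsilon\le\lambda$ for large $\lambda$; both contributions are then of size $O(1/\lambda)\|u-v\|_{\lambda,\epsilon}^2$, which is exactly where the $\epsilon/\lambda$ weighting built into \eqref{norm} is essential. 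This gives \eqref{contraction} with $C$ as claimed. I expect the main obstacle to be this final bookkeeping together with the $S$-absorption in the Carleman step: one must verify that a single threshold $\lambda_0$ simultaneously dominates the $|SW|^2$ term and forces $\epsilon/\lambda\le 1$, so that the regularization term does not spoil the clean $\sqrt{C/\lambda}$ rate.
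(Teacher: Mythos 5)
Your argument is correct, but it is organized very differently from what the paper actually writes. The paper's entire proof consists of one observation --- that $F(\x,\cdot)$ is Lipschitz on the bounded set where $|U|\le {\bf M}$ because it is smooth there and its denominator is bounded below by $\eta^2$ --- followed by a citation of Theorem 3.1 in the reference [Nguyen, \emph{Acta Math.\ Vietnamica} 2023], which supplies the contraction estimate as a black box. What you have done is reconstruct, essentially correctly, the proof of that cited theorem in the system setting: the two first-order variational inequalities for the minimizers over the convex set $H$, their summation into the master inequality for $W=\Phi(u)-\Phi(v)$, Young's inequality plus the Lipschitz bound on $F$, the componentwise Carleman estimate of Lemma \ref{lemma carl} with the zeroth-order term $SW$ absorbed into the $\lambda^3$ weight, and the final reassembly of the $\|\cdot\|_{\lambda,\epsilon}$ norm. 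Your version buys self-containedness and makes explicit why the $\epsilon/\lambda$ weighting in \eqref{norm} is exactly what is needed; the paper's version buys brevity at the cost of deferring all the analysis to the reference. Two small caveats, neither fatal: the Lipschitz constant for $F$ exists only on the set $\|U\|_{L^\infty}\le{\bf M}$, so the estimate really holds for $u,v\in H$ rather than for all of $H^s(\Omega)^N$ as the theorem's literal wording suggests (the paper has the same imprecision); and your final step needs the harmless normalization $\epsilon\le\lambda_0$ (e.g.\ $0<\epsilon<1$) so that the regularization contribution is genuinely $O(1/\lambda)$ with a constant independent of $\epsilon$ --- you correctly flag this yourself.
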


\begin{proof}
	Since the function $F$ is smooth, it is Lipschitz in the bounded domain $H$. In other words, we can find a constant $C$ depending on $F$ and ${\bf M}$, or more precisely, on the parameters listed in the statement of Theorem \ref{thm31}
such that
\[
	|F(\x, V_1(\x)) - F(\x, V_2(\x))| \leq C|V_1(\x) - V_2(\x)|
	\quad 
	\mbox{for all }
	\x \in \overline \Omega.
\]
	We now apply Theorem 3.1 in \cite{Nguyen:AVM2023} to obtain \eqref{contraction}.
\end{proof}

\begin{Corollary}
    Choose $\lambda \gg 1$ such that $\theta = \sqrt{\frac{C}{\lambda}} \in (0, 1)$. It follows from  \eqref{contraction} $\Phi_{\lambda, \epsilon}$ is a contraction map with respect to the norm $\|\cdot\|_{\lambda, \epsilon}$.
    \label{cor1}
\end{Corollary}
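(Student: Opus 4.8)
The plan is to read Corollary \ref{cor1} as an immediate consequence of the contraction inequality \eqref{contraction} established in Theorem \ref{thm31}, so that the proof reduces to choosing $\lambda$ large and checking the definition of a contraction map. First I would emphasize that the constant $C$ appearing in \eqref{contraction} is a fixed positive number depending only on the listed parameters ($\x_0$, $\Omega$, $\beta$, $T$, $\{\Psi_n\}_{n=1}^N$, ${\bf M}$, $\eta$, $d$) and, crucially, is \emph{independent} of $\lambda$. Hence I may select any $\lambda > \max\{\lambda_0, C\}$; for such $\lambda$ one has $\theta := \sqrt{C/\lambda} \in (0,1)$, which is exactly the ratio on the right-hand side of \eqref{contraction}.

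Next I would verify the two ingredients in the definition of a contraction on the metric space $(H, \|\cdot\|_{\lambda,\epsilon})$. The first is that $\Phi_{\lambda,\epsilon}$ is a self-map of $H$, which was already established when $\Phi_{\lambda,\epsilon}$ was introduced in \eqref{3.4}: for each $V \in H$ the strictly convex functional $J_{\lambda,\epsilon}(V)$ attains its unique minimizer on the closed convex set $H$, so $\Phi_{\lambda,\epsilon}(V) \in H$. The second is the Lipschitz bound with ratio $\theta < 1$: since $H \subset H^s(\Omega)^N$, I may restrict \eqref{contraction} to $u, v \in H$ and read off $\|\Phi_{\lambda,\epsilon}(u) - \Phi_{\lambda,\epsilon}(v)\|_{\lambda,\epsilon} \leq \theta \|u - v\|_{\lambda,\epsilon}$. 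Together these two facts are precisely the assertion that $\Phi_{\lambda,\epsilon}$ is a contraction with respect to $\|\cdot\|_{\lambda,\epsilon}$.

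I do not anticipate a genuine obstacle here, because the analytic content is entirely carried by the Carleman estimate (Lemma \ref{lemma carl}) and by Theorem \ref{thm31}, both of which I am entitled to assume. The only point worth a brief remark is that $\|\cdot\|_{\lambda,\epsilon}$ is, for each fixed $\lambda$ and $\epsilon$, a norm equivalent to the usual $H^s(\Omega)^N$ norm: this holds because $r(\x) = |\x - \x_0|$ is bounded away from $0$ and from above on $\overline\Omega$ (as $\x_0 \notin \Omega$ and $\Omega$ is bounded), so the Carleman weight $e^{2\lambda r^{-\beta}}$ lies between two positive constants, making the weighted $H^1$ term in \eqref{norm} comparable to the standard $H^1$ norm and the full expression comparable to $\|\cdot\|_{H^s(\Omega)^N}$. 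This equivalence ensures that $(H, \|\cdot\|_{\lambda,\epsilon})$ is a complete metric space, which is what one needs downstream for the Banach fixed-point theorem, even though the bare statement of Corollary \ref{cor1} requires only the contraction inequality itself.
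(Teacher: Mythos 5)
Your proposal is correct and follows exactly the route the paper intends: Corollary \ref{cor1} is stated as an immediate consequence of the contraction inequality \eqref{contraction}, obtained by choosing $\lambda$ large enough (using that $C$ is independent of $\lambda$) so that $\theta = \sqrt{C/\lambda} < 1$, together with the already-established fact that $\Phi_{\lambda,\epsilon}$ maps $H$ into itself. Your additional remarks on the equivalence of $\|\cdot\|_{\lambda,\epsilon}$ with the $H^s(\Omega)^N$ norm and the completeness of $(H, \|\cdot\|_{\lambda,\epsilon})$ are sound and anticipate what is needed for the fixed-point argument that follows the corollary, but they are not part of the corollary's own proof.
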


Due to Corollary \ref{cor1}, $\Phi_{\lambda, \epsilon}$, $\lambda \gg 1$ and $\epsilon > 0$, has a unique fixed-point in $H$, named as $\overline U$. The vector $\overline U$ can be computed as the limit of the sequence $\{U_n\}_{n \geq 0}$ as $n \to \infty$. The the sequence $\{U_n\}_{n \geq 0}$ is defined by
\begin{equation}
	\left\{
		\begin{array}{l}
			U_0 \mbox{ is an arbitrary vector-valued function in } H,\\
			U_{n + 1} = \Phi_{\lambda, \epsilon}(U_n), \quad n \geq 0.
		\end{array}
	\right.
	\label{sequence}
\end{equation}

We next discuss how close the limit $\overline U$ to the true solution to \eqref{2.10}.
Assume that the analog of \eqref{2.10}, which is read as
\begin{equation}
	\left\{
		\begin{array}{ll}
			\Delta U(\x) -  S U(\x) = F(\x, U(\x)) &\x \in \Omega \\
			U(\x) = \Big(\int_0^T p^*(\x, t) \Psi_m(t)dt\Big)_{m = 1}^N &\x \in \partial \Omega,\\
			\partial_{\nu} U(\x) = \Big(\int_0^T q^*(\x, t) \Psi_m(t)dt\Big)_{m = 1}^N &\x \in \partial \Omega.
		\end{array}
	\right.
	%\label{2.10}
\end{equation}
has a unique solution in $H$ where $p^*$ and $q^*$ are the noiseless versions of the measurements $p$ and $q$ respectively.
The corresponding noisy data with noise level $\delta > 0$ are denoted by $p^{\delta}$ and $q^{\delta}$.
Define the set 
\[
	E^\delta = \{{\bf e}_{H^s(\Omega)^N}: {\bf e}|_{\partial \Omega} = p^\delta - p^*
	\mbox{  and } \partial_{\nu} {\bf e}|_{\partial \Omega} = q^{\delta} - q^*\}
\] 
By noise level, we mean that $E^\delta \not = \emptyset$ and 
\begin{equation}
	\inf \{\|{\bf e}\|_{H^s(\Omega)^N}: {\bf e} \in E\} < \delta.
	\label{noise}
\end{equation}
By \eqref{noise}, there is a vector function ${\bf e} \in H^s(\Omega)^N$ such that
\begin{equation}
	\left\{
		\begin{array}{l}
			{\bf e}_{H^s(\Omega)^N} < 2\delta 
			\\			
			{\bf e}|_{\partial \Omega} = p^\delta - p^*
			\\
			\partial_{\nu} {\bf e}|_{\partial \Omega} = q^{\delta} - q^*
		\end{array}
	\right.
	\label{ee}
\end{equation}

\begin{Remark}
    The condition that $E^{\delta} \not = \emptyset$ and \eqref{noise}, as well as, \eqref{ee} imply that the differences $p^{\delta} - p^*$ and $q^{\delta} - q^*$ are traces of smooth vector-valued functions on $\partial \Omega$. This implies the noise must exhibit smooth characteristics, which may not always be true in real-world scenarios.
The demand for smoothness is a crucial component in the convergence result in Theorem \ref{thm2}. In real-world applications, the data can be smoothed using several established techniques, such as spline curves or the Tikhonov regularization approach.
Nevertheless, this smoothing step can be relaxed during numerical investigations. This implies that our method's practical application may surpass its theoretical proof. In our numerical tests, we do not have to smooth out the noisy data. Instead, we directly derive the desired numerical solutions to \eqref{main_eqn} using the noisy (raw) data of the form
    \begin{equation}
        p^{\delta} = p^*(1 + \delta {\rm rand})
        \quad 
        \mbox{and }
        \quad
        q^{\delta} = q^*(1 + \delta {\rm rand})
        \label{random}
    \end{equation}
    where ${\rm rand}$ is a function taking uniformly distributed random numbers in the range $[-1, 1].$
    \label{rem41}
\end{Remark}

We have the theorem.
\begin{Theorem}
     Recall $\beta$ and $\lambda_0$ as in Lemma \ref{lemma carl}.
    Let $\lambda \geq \lambda_0$  be such that \eqref{Car est} holds true and the number 
     $\theta$ in Corollary \ref{cor1} is in $(0, 1)$. 
    Let $\{U_n\}_{n \geq 0} \subset H$ be the sequence defined in \eqref{sequence} and $H$ is defined in \eqref{H} with $p$ and $q$ being replaced by $p^\delta$ and $q^\delta$ respectively. Then,
    \begin{equation}
            \|\overline U - U^*\|_{\epsilon, \beta, \lambda}^2 \leq
            \frac{C}{\lambda} \Big[
            \int_{\Omega} e^{2\lambda r^{-\beta}(\x)} \Big[ |\Delta {\bf e}(\x))|^2
		 +
	        |{\bf e}(\x)|^2 
	        + |\nabla {\bf e}(\x)|^2 
	    \Big] d\x
	    +  \epsilon \|{\bf e}\|_{H^p(\Omega)}^2 +  \epsilon \|u^*\|_{H^p(\Omega)}^2
            \Big]
            \label{u_star_conve}
        \end{equation}
    where $C$ is a positive constant depending only on $\x_0,$ $\Omega,$ $\beta$, $T$, $\{\Psi_n\}_{n = 1}^N$, ${\bf M}$, $\eta$ and $d$.
    \label{thm2}
\end{Theorem}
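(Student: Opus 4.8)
The plan is to exploit the fact that the fixed point $\overline U$ of $\Phi_{\lambda,\epsilon}$ is precisely the minimizer of the Carleman quasi-reversibility functional $J_{\lambda,\epsilon}(\overline U)$ (Remark \ref{rem_quasi}), whereas $U^*$ solves the noiseless system \eqref{2.10} exactly. Combining the first-order optimality condition for this minimizer with the Carleman estimate of Lemma \ref{lemma carl} (used with $A=\Id$, so that $\Div(A\nabla v)=\Delta v$) should yield the stability bound \eqref{u_star_conve}. This is exactly the Carleman-quasi-reversibility convergence mechanism of \cite{Nguyen:AVM2023}, so one may in principle simply invoke the convergence theorem there; below I outline the self-contained argument.

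First I would introduce $W=\overline U-U^*$ and $w=W-{\bf e}$, with ${\bf e}$ the smooth function from \eqref{ee}. Because ${\bf e}$ carries exactly the boundary discrepancy, $U^*+{\bf e}$ and $\overline U$ have the same Cauchy data, hence $w|_{\partial\Omega}=\partial_\nu w|_{\partial\Omega}=0$, so $w$ is admissible in Lemma \ref{lemma carl}. Writing $Lv=\Delta v-Sv$, minimality of $\overline U$ gives, for every admissible direction $h$,
\[
\int_\Omega e^{2\lambda r^{-\beta}(\x)}\big(\Delta\overline U-S\overline U-F(\x,\overline U)\big)\cdot(\Delta h-Sh)\,d\x+\epsilon\,\langle\overline U,h\rangle_{H^s(\Omega)^N}=0 .
\]
Since $\Delta U^*-SU^*-F(\x,U^*)=0$, subtraction gives $\Delta\overline U-S\overline U-F(\x,\overline U)=Lw+L{\bf e}-\big(F(\x,\overline U)-F(\x,U^*)\big)$; taking $h=w$ then produces an identity for $\int_\Omega e^{2\lambda r^{-\beta}}|Lw|^2$ in terms of ${\bf e}$, the $F$-difference, and the regularization pairing.

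Next I would estimate the right-hand side. On the bounded set $H$, $F$ is Lipschitz (as established in the proof of Theorem \ref{thm31}), so $|F(\x,\overline U)-F(\x,U^*)|\le C(|w|+|{\bf e}|)$, while $|L{\bf e}|^2\le C(|\Delta{\bf e}|^2+|{\bf e}|^2)$. A Cauchy--Schwarz/Young splitting absorbs $\tfrac12\int e^{2\lambda r^{-\beta}}|Lw|^2$ back to the left; the regularization pairing is handled through $\overline U=U^*+{\bf e}+w$, the genuinely present $\epsilon\|w\|_{H^s(\Omega)^N}^2$ swallowing the $\tfrac\epsilon2\|w\|_{H^s(\Omega)^N}^2$ from Young's inequality and leaving $C\epsilon(\|U^*\|_{H^s(\Omega)^N}^2+\|{\bf e}\|_{H^s(\Omega)^N}^2)$. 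This gives $\int e^{2\lambda r^{-\beta}}|Lw|^2\le C\int e^{2\lambda r^{-\beta}}(|\Delta{\bf e}|^2+|{\bf e}|^2+|w|^2)+C\epsilon(\|U^*\|_{H^s(\Omega)^N}^2+\|{\bf e}\|_{H^s(\Omega)^N}^2)$. Finally, since $|\Delta w|^2\le 2|Lw|^2+2|S|^2|w|^2$, Lemma \ref{lemma carl} bounds $\lambda\int e^{2\lambda r^{-\beta}}|\nabla w|^2+\lambda^3\int e^{2\lambda r^{-\beta}}|w|^2$ from above by $C\int e^{2\lambda r^{-\beta}}|Lw|^2$ up to a lower-order $\int e^{2\lambda r^{-\beta}}|w|^2$ contribution; dividing by $\lambda$ and recombining according to the norm \eqref{norm} yields \eqref{u_star_conve} (the nonnegative $|\nabla{\bf e}|^2$ term on its right-hand side may be added freely).

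The crux — and the only place forcing $\lambda\gg1$ — is the absorption of the stray term $C\int e^{2\lambda r^{-\beta}}|w|^2$, which arises both from the Lipschitz bound on $F$ and from converting $|\Delta w|^2$ into $|Lw|^2$, into the $C\lambda^3\int e^{2\lambda r^{-\beta}}|w|^2$ generated by the Carleman estimate; this is legitimate precisely because $\lambda^3$ eventually dominates the fixed constant $C$. The companion sign bookkeeping for the $\epsilon$-pairing, so that the genuine $\epsilon\|w\|_{H^s(\Omega)^N}^2$ absorbs the unwanted Young term, is the other delicate point. Everything else is Cauchy--Schwarz and routine manipulation, which is why the full computation can be deferred to \cite{Nguyen:AVM2023}.
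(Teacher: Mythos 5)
Your proposal is correct and takes essentially the same route as the paper, which offers no independent argument but simply remarks that the proof of \cite[Theorem 4.1]{Nguyen:AVM2023} adapts to the system case; your outline (shifting by ${\bf e}$ to get zero Cauchy data, the optimality condition at the fixed point, the Lipschitz bound on $F$, and absorption of the stray $|w|^2$ terms via the Carleman estimate) is precisely that adaptation. The only caveat is that, because $H$ is a convex set rather than an affine subspace (owing to the constraint $\|h\|_{L^\infty(\Omega)}\le {\bf M}$), the first-order condition is the variational inequality $\langle DJ_{\lambda,\epsilon}(\overline U),\,h-\overline U\rangle\ge 0$ tested at $h=U^*+{\bf e}$ rather than the equality you wrote — but the one-sided bound it yields is exactly what your subsequent Cauchy--Schwarz and absorption steps require.
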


Theorem \ref{thm2} has a broader scope than the theorem presented in \cite[Theorem 4.1]{Nguyen:AVM2023} in the sense that Theorem \ref{thm2} can be applied to solve systems, whereas \cite[Theorem 4.1]{Nguyen:AVM2023} pertains to a single equation. Nonetheless, the proof of \cite[Theorem 4.1]{Nguyen:AVM2023} can be readily adapted to substantiate Theorem \ref{thm2}.

Estimate \eqref{u_star_conve} is interesting in the sense that it, together with \eqref{ee}, guarantees that $\overline u$ tends to $u^*$ as the noise level $\delta$ and the regularization parameter $\epsilon$ tends to $0$.
In particular,
fix $\lambda \gg 1$ and set $\epsilon = O(\delta^2)$, the convergence rate is Lipschitz. 

\section{Numerical study}\label{sec4}

It is suggested by Remark \ref{rem24}, Theorem \ref{thm31}, and Theorem \ref{thm2} a Carlaman contraction method to solve Problem \ref{p}.
We present this method in
Algorithm \ref{alg}.

\begin{algorithm}[h!]
\caption{\label{alg}The procedure to compute the numerical solution to \eqref{main_eqn}}
	\begin{algorithmic}[1]
	\State \label{s_chooseN} Choose a cut-off number $N$.
	\State \label{s1} Choose Carleman parameters $\x_0,$ $\beta$, and $\lambda$ and a regularization parameter $\epsilon$.
	 \State \label{s2} Set $n = 0$. Choose an arbitrary initial solution $U_0 \in H.$
%	 \For{$n = 1$ to $\overline n$ (for some positive integer $\overline n$)}
		\State \label{step update} 		
		Compute $U_{n + 1} = \Phi_{\lambda, \epsilon}(U_n)$ where $ \Phi_{\lambda, \epsilon}$ is defined in \eqref{3.4}.
%	 \EndFor
	\If {$\|U_{n + 1} - U_n\|_{L^2(\Omega)} > \kappa_0$ (for some fixed number $\kappa_0 > 0$)}
		\State Replace $n$ by $n + 1.$
		\State Go back to Step \ref{step update}.
	\Else	
		\State Set the computed solution $U^{\rm comp} = U_{n + 1}.$
	\EndIf
	\State \label{s11} Write $U^{\rm comp} = (u_1^{\rm comp}, \dots, u_N^{\rm comp})^{\rm T}$ and set the desired solution as in \eqref{2.13}.
%	\begin{equation}
%	f^{\rm comp}(\x) = \sum_{n = 1}^N u^{\rm comp}_n(\x) \Psi_n(0)
%	\quad
%	\mbox{for } \x \in \Omega.
%%	\label{2.13}
%\end{equation}

\end{algorithmic}
\end{algorithm}

%\begin{Remark}
%	We numerically observe that when $f$ is a smooth function, one can reconstruct the damping coefficient $a$ using \eqref{2,1}.
%	In the case when $f$ is not smooth, formula \eqref{2,1} does not provide a good computed damping coefficient.
%\end{Remark}

In this section, we display several numerical examples in 2D computed by Algorithm \ref{alg}.
In all tests below, we set $\Omega = (-1, 1)^2$ and $T = 1$.

\subsection{Discretization and data simulation}

To generate noisy data $p$ and $q$ for Problem \ref{p}, we need to solve the hyperbolic equation \eqref{main_eqn}.
However,  computing the solution to \eqref{main_eqn} on the whole domain $\R^2 \times (0, T)$ is complicated. 
For simplicity, we replace $\R^d$ with the domain $G = (-3, 3)^2$ that contains the computational domain $\Omega$.
On $G$, we arrange a uniform grid of points 
\[
	\mathcal{G} = \{
		\x_{ij} = (x_i = -1 + (i - 1) d_\x, y_j = -1 + (j - 1) d_\x):
		1 \leq i, j \leq N_\x 
	\}
\]
where $N_\x = 241$ and $d_\x = \frac{2}{N_\x - 1} = .025$.
We discretize the time domain $(0, T)$ by the partition
\[
	\mathcal{T} = \{t_l = (l-1)d_t: 1 \leq l \leq N_t\}
\]
where $N_t = 201$ and $d_t = T/(N_t - 1) = 0.005$.

We write the governing partial differential equation in the finite difference scheme as
\begin{equation}
	\frac{u(\x_{ij}, t_{l+1}) - 2u(\x_{ij}, t_{l}) + u(\x_{ij}, t_{l-1})}{d_t^2}
	+ a(\x_{ij}) \frac{u(\x_{ij}, t_{l+1}) - u(\x_{ij}, t_{l})}{d_t}
	= \Delta^{d_\x} u(\x_{ij}, t_l),
	\label{4.1}
\end{equation}
for all $\x_{ij} \in \mathcal{G}$ and $t_l \in \mathcal{T}$
where
\[
	\Delta^{d_\x} u(\x_{ij}, t_l) = \frac{u(\x_{(i+1)j}, t_{l}) + u(\x_{(i-1)j}, t_l) + u(\x_{i(j - 1)}, t_{l}) + u(\x_{i(j + 1)}, t_{l}) - 4u(\x_{ij}, t_{l}) }{d_\x^2}.
\]
Solving \eqref{4.1} for $u(\x_{ij}, t_{l+1})$ gives
\begin{equation}
	u(\x_{ij}, t_{l+1}) =  
	\frac{\frac{1}{d_t^2} (2 u(\x_{ij}, t_{l}) - u(\x_{ij}, t_{l - 1}))
	+ \frac{a(\x_{ij})}{d_t} u(\x_{ij},t_{l})
	+ \Delta^{d_\x} u(\x_{ij}, t_l)}{\frac{1}{dt^2} + \frac{a(\x_{ij})}{d_t}}
	\label{4.2}
\end{equation}
for all $\x_{ij} \in \mathcal{G}$ and $t_l \in \mathcal{T}$.
Due to the initial conditions in \eqref{main_eqn}, we can compute
\[
	u(\x_{ij}, t_1) = f(\x_{ij}),
	\quad
	u(\x_{ij}, t_2) = f(\x_{ij}) - a(\x_{ij}) f(\x_{ij}) d_t
\]
for all $\x_{ij} \in \mathcal{G}.$
Using \eqref{4.2}, we can compute $u(\x_{ij}, t_3), u(\x_{ij}, t_4), \dots$, for all $\x_{ij} \in \mathcal{G}.$
This method of solving hyperbolic equations is well-known as the explicit method.
Having the function $u$ on $\mathcal G \times \mathcal{T}$ in hand, we can extract the noiseless data $p^*$ and $q^*$ on $(\mathcal G \cap \partial \Omega) \times \mathcal{T}$ easily.
The corresponding noisy data  $p^\delta$ and $q^\delta$  are computed as in \eqref{random}.
In our computation, we take $\delta = 10\%.$

\subsection{Implementation}

We now present some remarkable points in computing solutions to the inverse problem. The first step is to find a suitable number $N$ for \eqref{2.1}.

%\subsubsection{Choosing the cut-off number $N$}

 We employ the strategy in \cite{HaoThuyLoc:2023} to determine the cut-off number $N$ in step \ref{s_chooseN}. 
More precisely, the knowledge of given data $p(\x, t)$, $(\x, t) \in \partial \Omega \times (0, T)$ is helpful in determining the cut-off numbers for step \ref{s_chooseN} of Algorithm \ref{alg}. The procedure is based on a trial-and-error process. 
Inspired by \eqref{2.1} and the fact that $u(\x, t) = p(\x, t)$ for all $(\x, t) \in \partial  \Omega \times (0, T)$, for each $N \geq 1$, we define the function $\varphi: \mathbb{N} \to \R$  that represents the relative distance between the data $p$ and the truncation of its Fourier expansion in \eqref{2.1}.
The function $\eta$ is defined as follows
\begin{equation}
	\eta(N)= \frac{\|p(\x)- \sum_{n = 1}^N p_{n}(\x)\Psi_n(t)\|_{L^{\infty}(\partial \Omega \times (0, T))}}{\|p(\x)\|_{L^{\infty}(\partial \Omega \times (0, T))}}
	\quad
	\mbox{where}
	\quad
	p_{n}(\x) = \int_{0}^T p(\x, t )\Psi_n(t) dt
	\label{4.3}
\end{equation}
 We increase the numbers $N$ until $\eta(N)$ is sufficiently small.
 In our computation, $N = 35.$

%\subsubsection{Choosing the Carleman weight function}

The selection of artificial parameters in Algorithm \ref{alg} is facilitated by a process of trial and error. We use a reference test (Test 1 below) where the correct solution is already known. Manually, we select $\x_0,$ $\lambda$, $\lambda,$ $\epsilon$, $\kappa_0$ such that the solution computed through Algorithm \ref{alg} is satisfactory. Then, we take these parameters for all other tests where the true solutions are not known.
In our computation, $\x_0 = (0, 5.5),$ $\beta = 25$, and $\lambda = 45$.
The regularization parameter $\epsilon$ is $10^{-5}.$ 
We also set the number $\eta$ in \eqref{2,1} is $10^{-11}$.

Step \ref{s2} of Algorithm \ref{alg} requires us to choose a vector-valued function $U_0$ in $H$. 
A straightforward approach for computing such a function involves solving the linear problem, which results from excluding the nonlinearity $F$ from \eqref{2.10},
namely,
\begin{equation}
    \left\{
        \begin{array}{ll}
             \Delta U_{0}(\x) - SU_0(\x)  = 0 &\x \in \Omega,  \\
             U_{0}(\x) = \Big(\int_0^T p(\x, t) \Psi_m(t)dt\Big)_{m = 1}^N &\x \in \partial \Omega,\\
             \partial_{\nu} U_{0}(\x)  = \Big(\int_0^T q(\x, t) \Psi_m(t)dt\Big)_{m = 1}^N &\x \in \partial \Omega,
        \end{array}
    \right.
    \label{main_eqn41}
\end{equation}
 using the Carleman quasi-reversibility method, as indicated in Remark \ref{rem_quasi}.
Similarly, in Step \ref{step update}, we aim to minimize $J_{\lambda, \epsilon}(U_n)$ in $H$, $n \geq 0$. As in Remark \ref{rem_quasi}, the minimizer we obtain, $U_{n+1}$, can be viewed as the regularized solution to the following system
\begin{equation}
    \left\{
        \begin{array}{ll}
             \Delta U_{n + 1}(\x) - SU_{n + 1} + F(\x, U_n(\x)) = 0 &\x \in \Omega,  \\
             U_{n + 1}(\x) = \Big(\int_0^T p(\x, t) \Psi_m(t)dt\Big)_{m = 1}^N &\x \in \partial \Omega,\\
             \partial_{\nu} U_{n+1}(\x)  = \Big(\int_0^T q(\x, t) \Psi_m(t)dt\Big)_{m = 1}^N &\x \in \partial \Omega.
        \end{array}
    \right.
    \label{main_eqn40}
\end{equation}

The details in implementation to compute the regularized solution $U_0$ and $U_{n + 1}$, $n \geq 0$ by solving \eqref{main_eqn41} and \eqref{main_eqn40} respectively, were presented in \cite{LeNguyen:jiip2022, Nguyen:CAMWA2020, Nguyens:jiip2020}, in which we use MATLAB in-built linear least squares optimization package. We will not provide these details again in this document.

\subsection{Numerical examples}

{\bf Test 1.} We test the case when the ``donut-shaped" function $f^{\rm true}$ is given by
\[
	f^{\rm true}(x, y) = \left\{
	\begin{array}{ll}
		2 &\mbox{if } 0.15^2 < (x - 0.35)^2 + y^2 < 0.6^2\\
		1 &\mbox{otherwise}
	\end{array}
	\right.
	\mbox{for all } (x, y) \in \Omega.
\]

The true and computed source functions $f$ are displayed in Figure \ref{fig1}.

\begin{figure}[h!]
	\subfloat[\label{fig1a}]{\includegraphics[width=.3\textwidth]{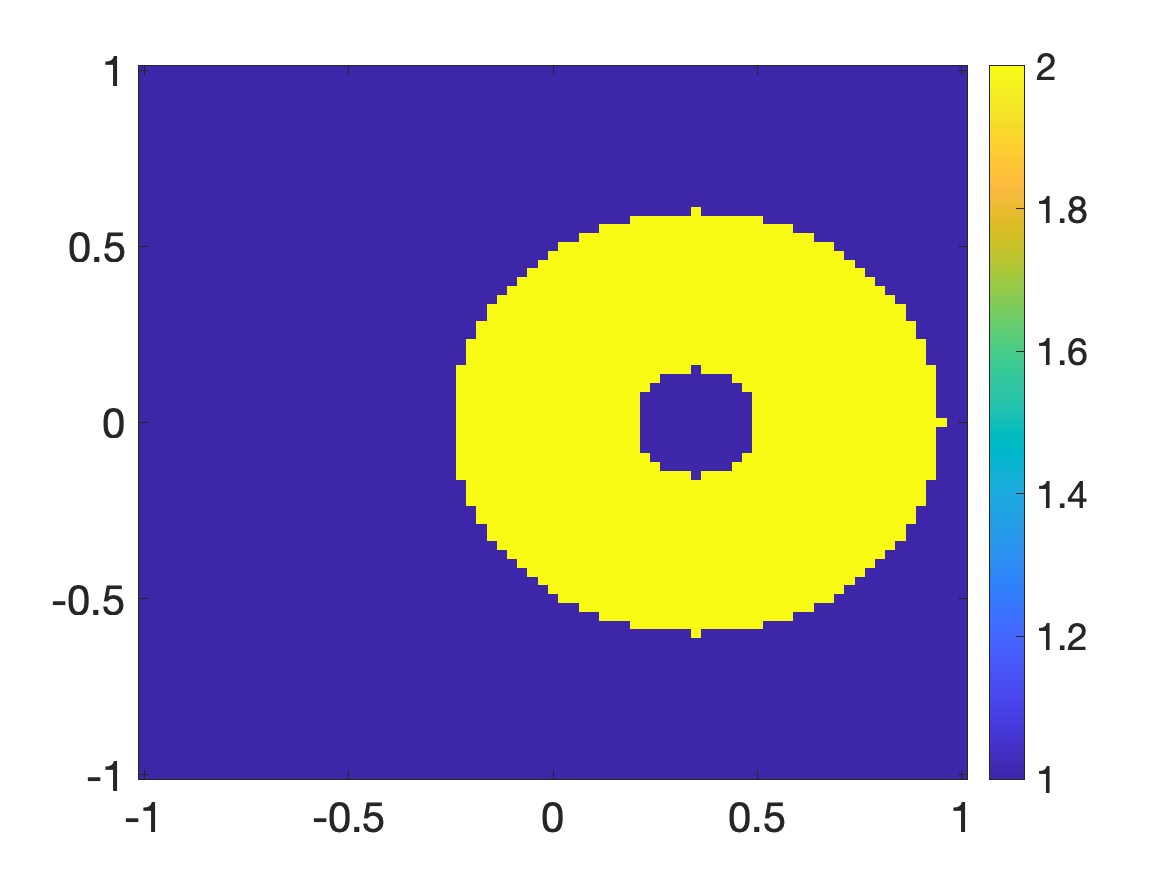}}
	\quad
	\subfloat[\label{fig1b}]{\includegraphics[width=.3\textwidth]{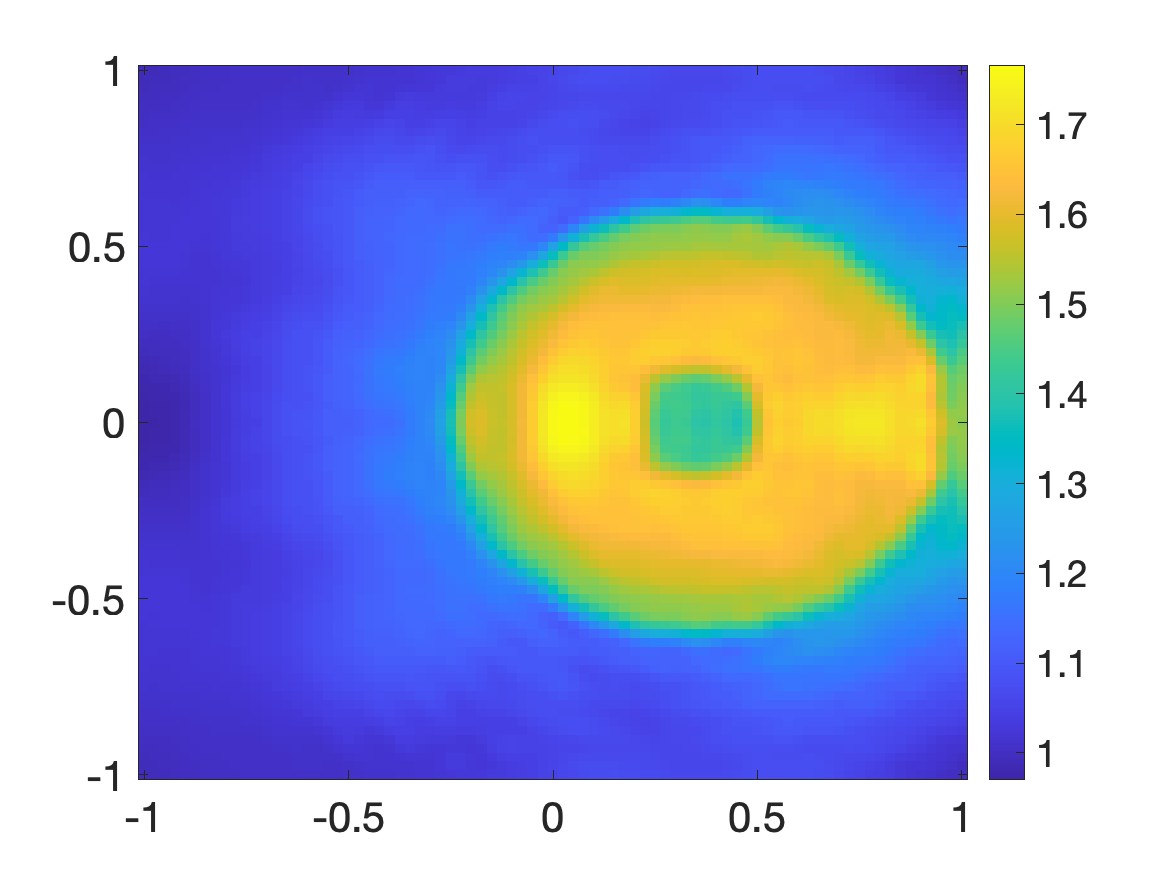}}
	\quad
	\subfloat[\label{fig1c}]{\includegraphics[width=.3\textwidth]{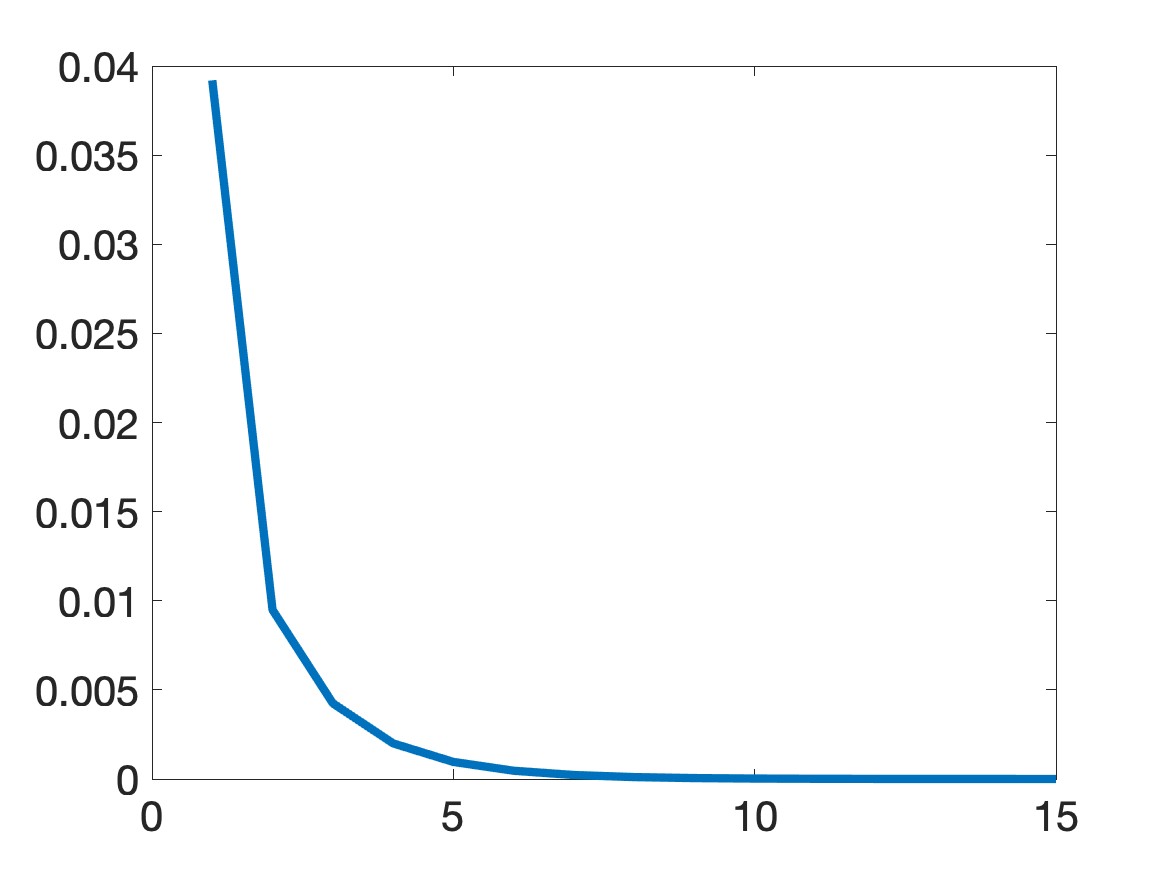}}
	\caption{\label{fig1} (a) and (b) The true and computed initial condition $f^{\rm true}$ and $f^{\rm comp}$ respectively. (c) The consecutive relative difference $\frac{|U_{n + 1} - U_n|_{L^{\infty}(\Omega)}}{\|U_n\|_{L^{\infty}(\Omega)}}$.
	The horizontal axis of this figure is the number of iteration $n$.
	 The boundary data used to reconstruct the function $f$ is corrupted with $\delta = 10\%$ noise.}
\end{figure}

Figure \ref{fig1} illustrates that Algorithm \ref{alg} provides a satisfactory solution to Problem \ref{p}. By comparing the ``donut"-shaped inclusions in both Figure \ref{fig1a} and Figure \ref{fig1b}, we conclude that the computation of the donut's shape and position is quite accurate. Furthermore, the maximum value of the function $f$ within the computed donut is 1.765, which corresponds to a relative error of 11.73\%.
	The approaching zero behavior of the curve in Figure \ref{fig1c} numerically demonstrates the convergence of the Carleman contraction method.

{\bf Test 2.} We consider an intriguing case $f^{\rm true}$ whose graphs features an ``$\Sigma$" shape. The true value of the function $f^{\rm true}(x, y) = 2$ if the point $(x, y)$ in the letter $\Sigma.$  
Otherwise, $f^{\rm true}(x, y) = 1.$
The true and computed source functions $f$ are displayed in Figure \ref{fig2}.

\begin{figure}[h!]
	\subfloat[\label{fig2a}]{\includegraphics[width=.3\textwidth]{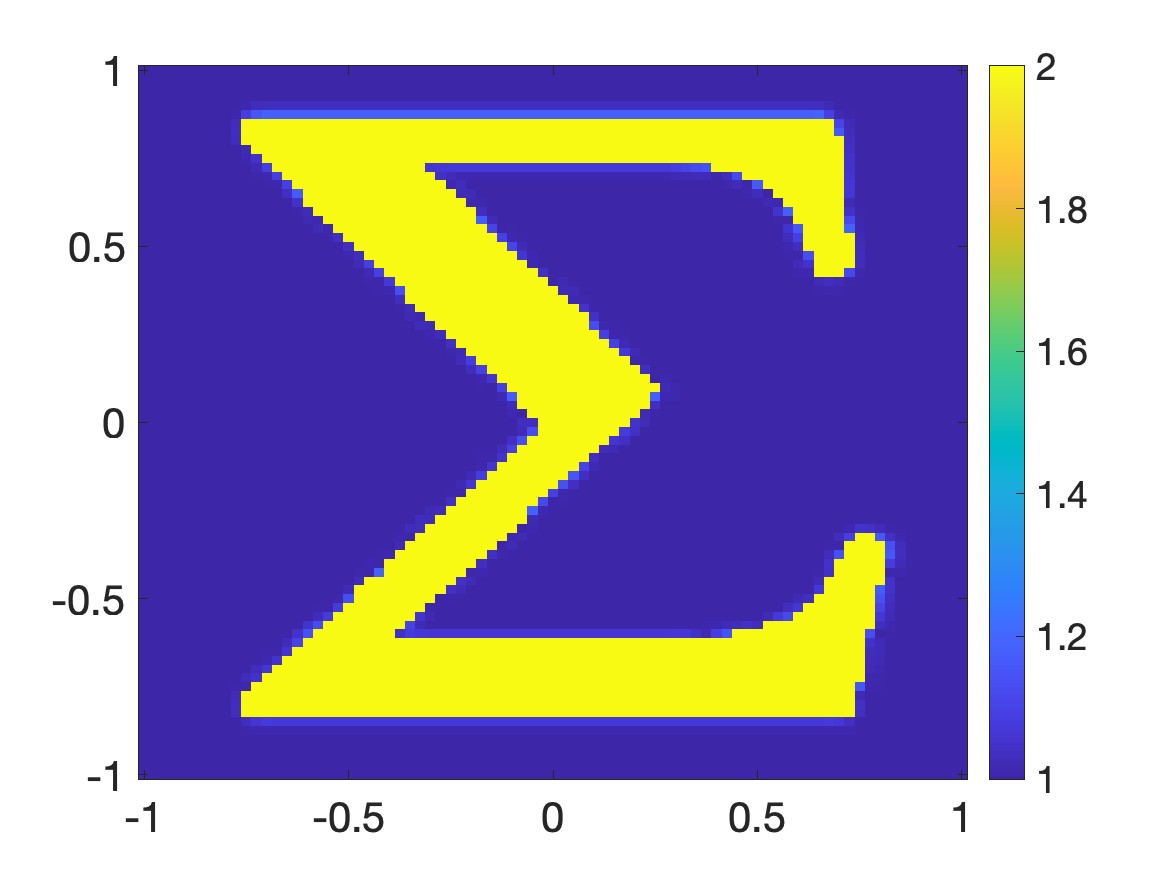}}
	\quad
	\subfloat[\label{fig2b}]{\includegraphics[width=.3\textwidth]{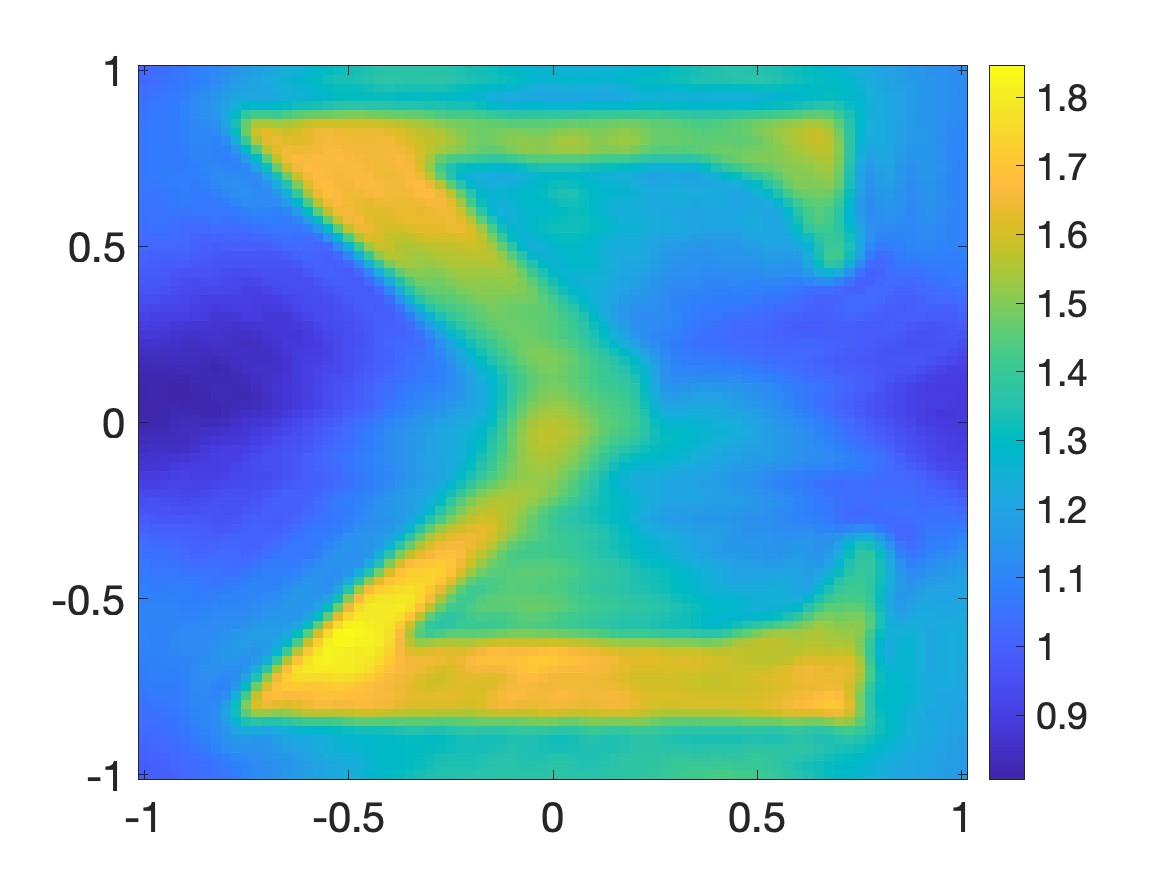}}
	\quad
	\subfloat[\label{fig2c}]{\includegraphics[width=.3\textwidth]{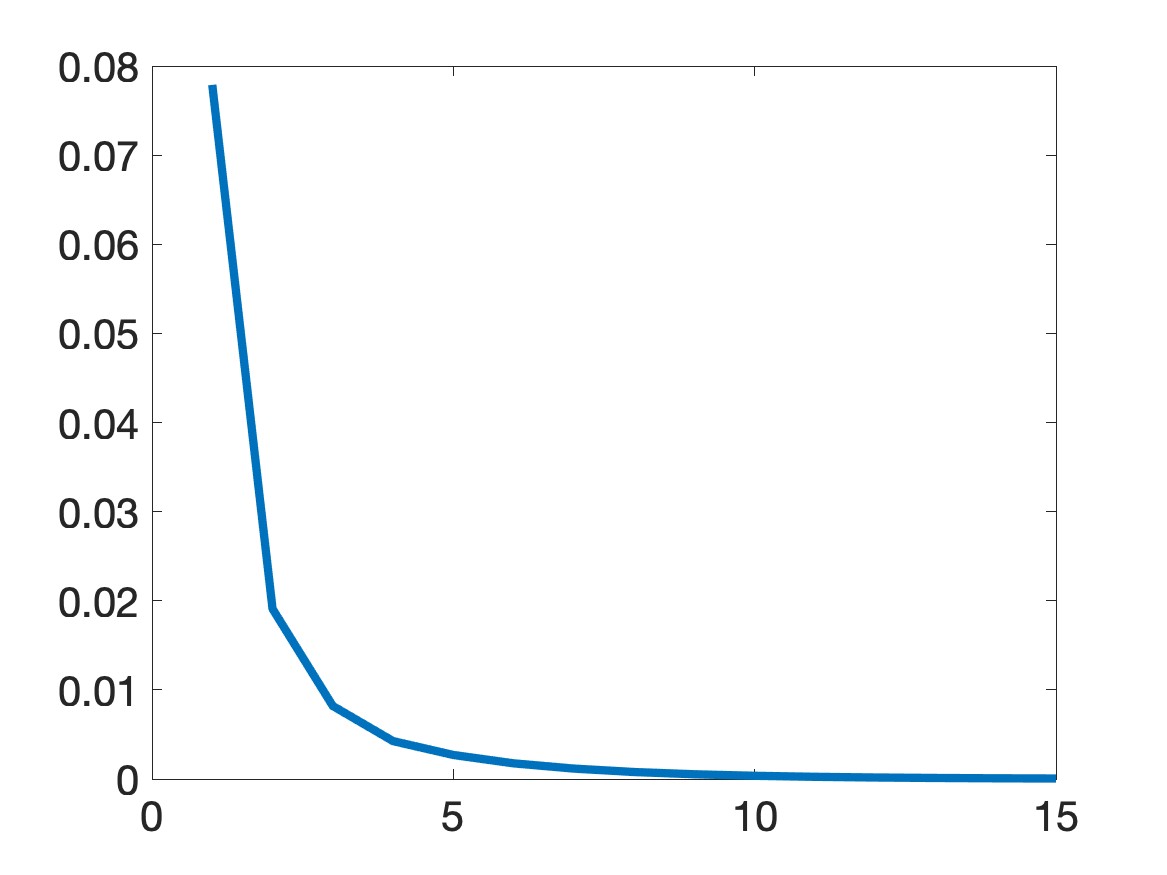}}
	\caption{\label{fig2} (a) and (b) The true and computed initial condition $f^{\rm true}$ and $f^{\rm comp}$ respectively. (c) The consecutive relative difference $\frac{|U_{n + 1} - U_n|_{L^{\infty}(\Omega)}}{\|U_n\|_{L^{\infty}(\Omega)}}$.
	The horizontal axis of this figure is the number of iteration $n$.
	 The boundary data used to reconstruct the function $f$ is corrupted with $\delta = 10\%$ noise.}
\end{figure}

Figure \ref{fig2} presents a numerical solution that is satisfactory for Problem \ref{p}. The letter ``$\Sigma$" and its position are accurately reconstructed. The maximum value of the function $f$ inside the computed "$\Sigma$" is 1.8454, representing a relative error of 7.73\%.
Similar to Test 1, the convergence of the Carleman contraction method is numerically confirmed by Figure \ref{fig2c}.

{\bf Test 3.}
We test a similar experiment to the source function $f$ in Test 2. 
The function $f^{\rm true}$ has a graph looking like the letter ``$\Omega$." The true value of the function $f^{\rm true}(x, y) = 2$ if the point $(x, y)$ in the letter $\Omega.$  
Otherwise, $f^{\rm true}(x, y) = 1.$
The true and computed source functions $f$ are displayed in Figure \ref{fig3}.

\begin{figure}[h!]
	\subfloat[\label{fig3a}]{\includegraphics[width=.3\textwidth]{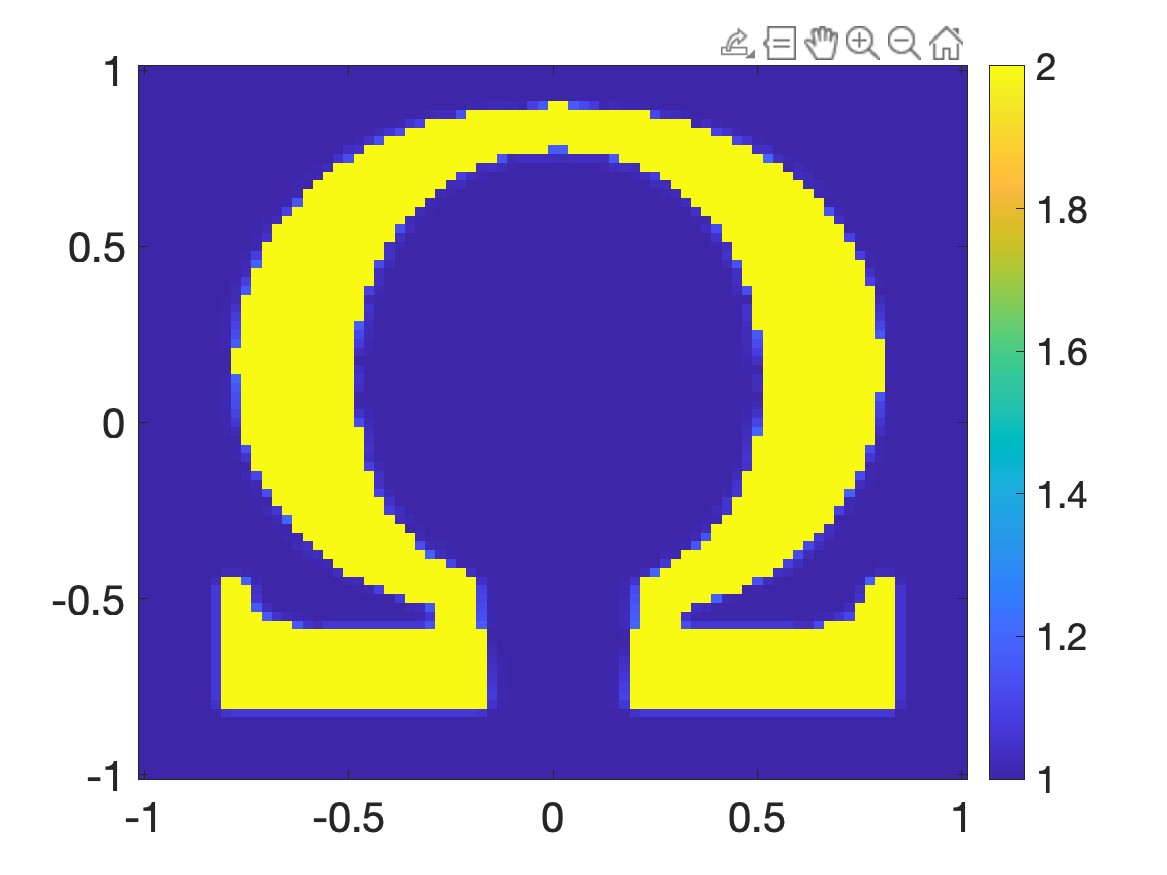}}
	\quad
	\subfloat[\label{fig3b}]{\includegraphics[width=.3\textwidth]{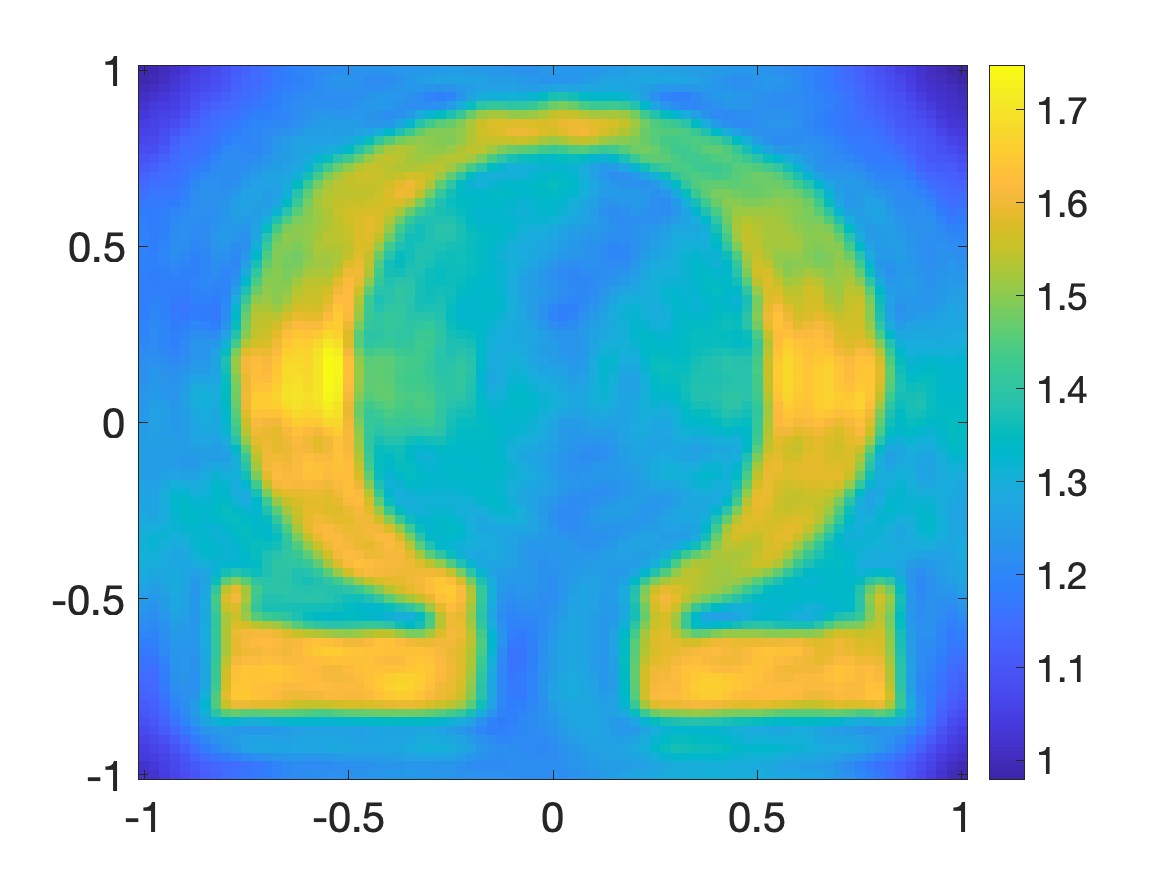}}
	\quad
	\subfloat[\label{fig3c}]{\includegraphics[width=.3\textwidth]{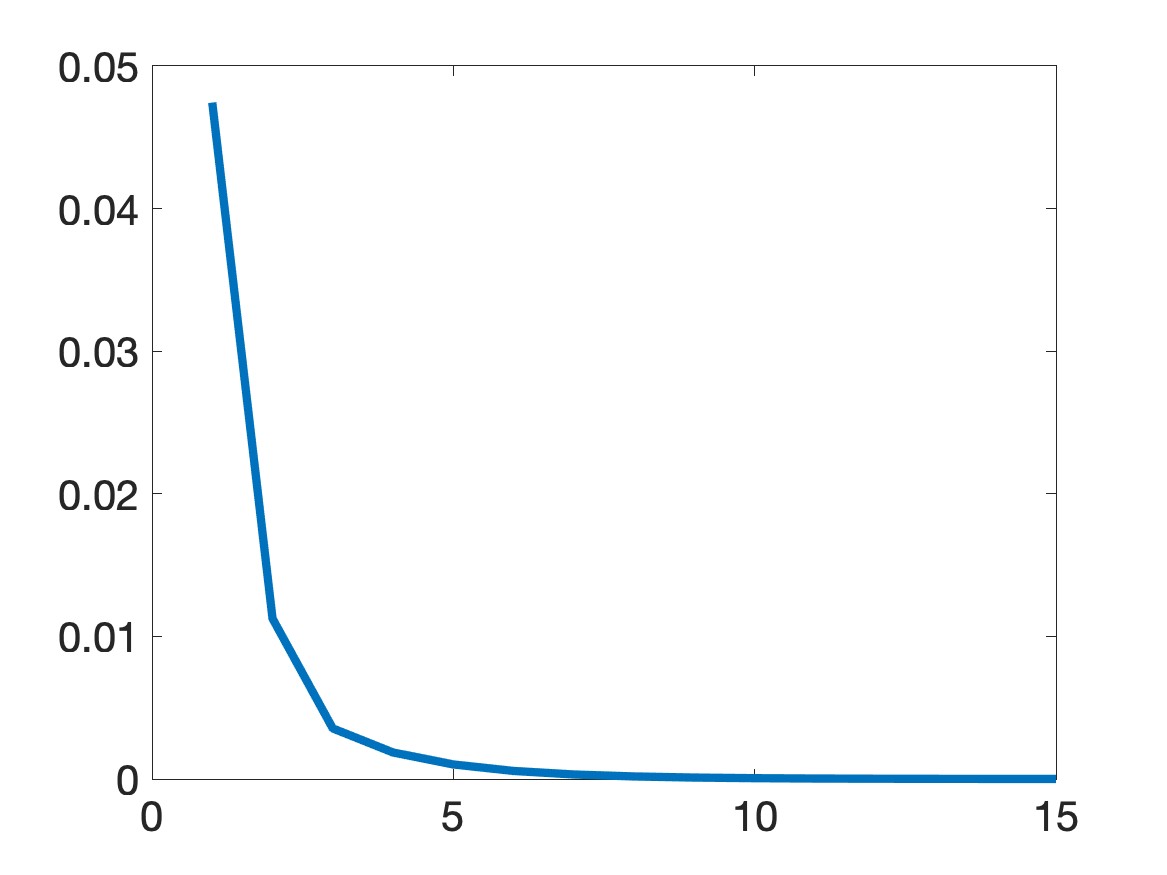}}
	\caption{\label{fig3} (a) and (b) The true and computed initial condition $f^{\rm true}$ and $f^{\rm comp}$ respectively. (c) The consecutive relative difference $\frac{|U_{n + 1} - U_n|_{L^{\infty}(\Omega)}}{\|U_n\|_{L^{\infty}(\Omega)}}$.
	The horizontal axis of this figure is the number of iteration $n$.
	 The boundary data used to reconstruct the function $f$ is corrupted with $\delta = 10\%$ noise.}
\end{figure}

Figure \ref{fig3} presents a numerical solution that is satisfactory for Problem \ref{p}. The letter "$\Omega$" and its position are accurately reconstructed. The maximum value of the function $f$ inside the computed "$\Omega$" is 1.7473, representing a relative error of 12.64\%.
The convergence of the Carleman contraction method is numerically confirmed by in Figure \ref{fig3c}.

{\bf Test 4.} In this test, we consider a more complicated circumstance in which the graph of the true function $f$ has two ``inclusions".
 Each inclusion looks like a horizontal line segment. 
 The value of of function $f^{\rm true}$ is 4 in the line on the top and $3$ inside the line in the bottom.  
 The true and computed source functions $f$ are displayed in Figure \ref{fig4}.
 
 \begin{figure}[h!]
	\subfloat[\label{fig4a}]{\includegraphics[width=.3\textwidth]{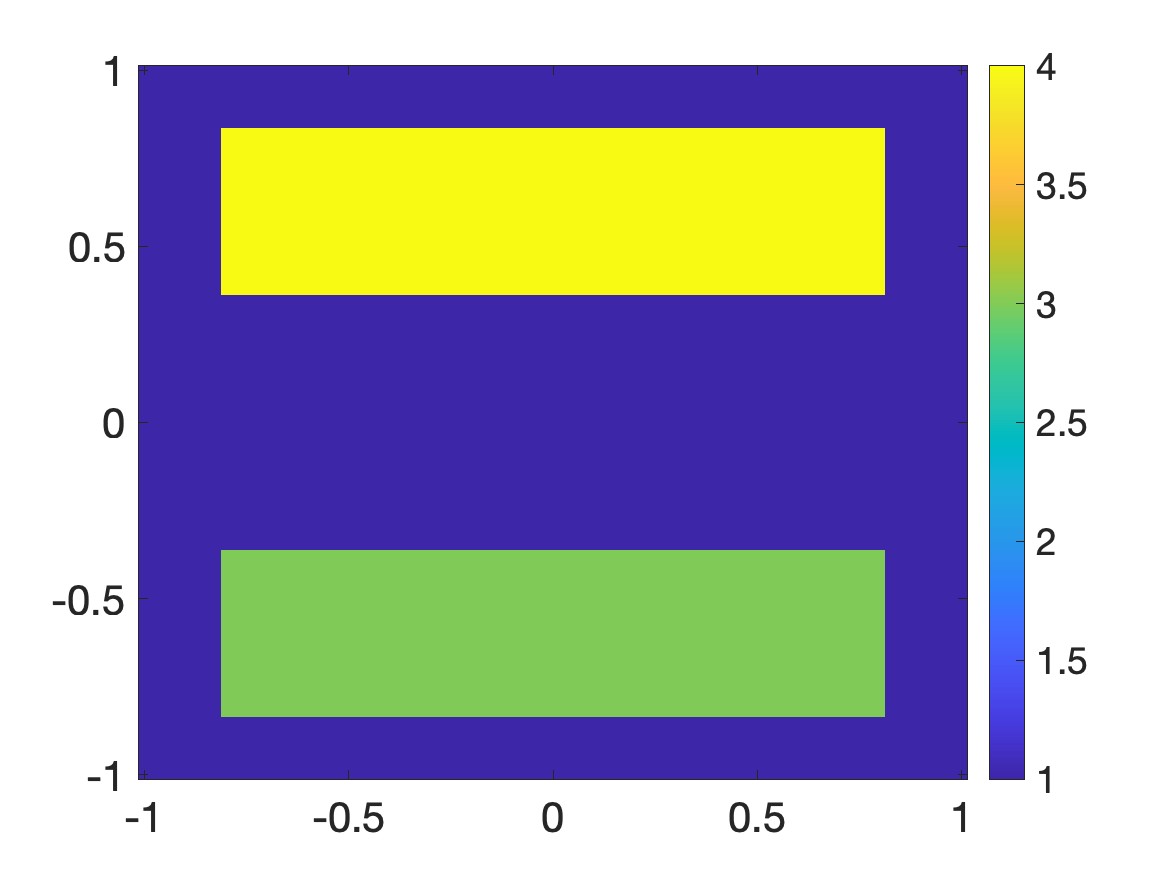}}
	\quad
	\subfloat[\label{fig4b}]{\includegraphics[width=.3\textwidth]{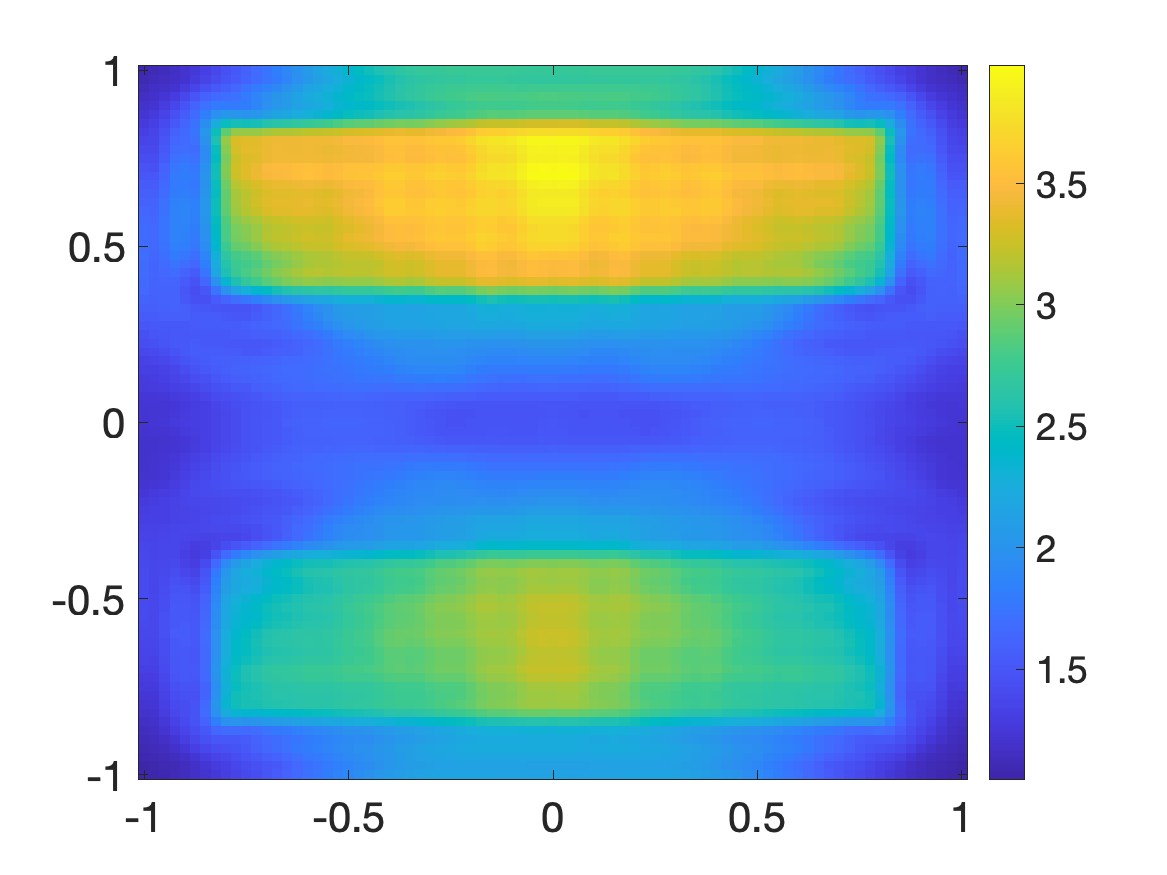}}
	\quad
	\subfloat[\label{fig4c}]{\includegraphics[width=.3\textwidth]{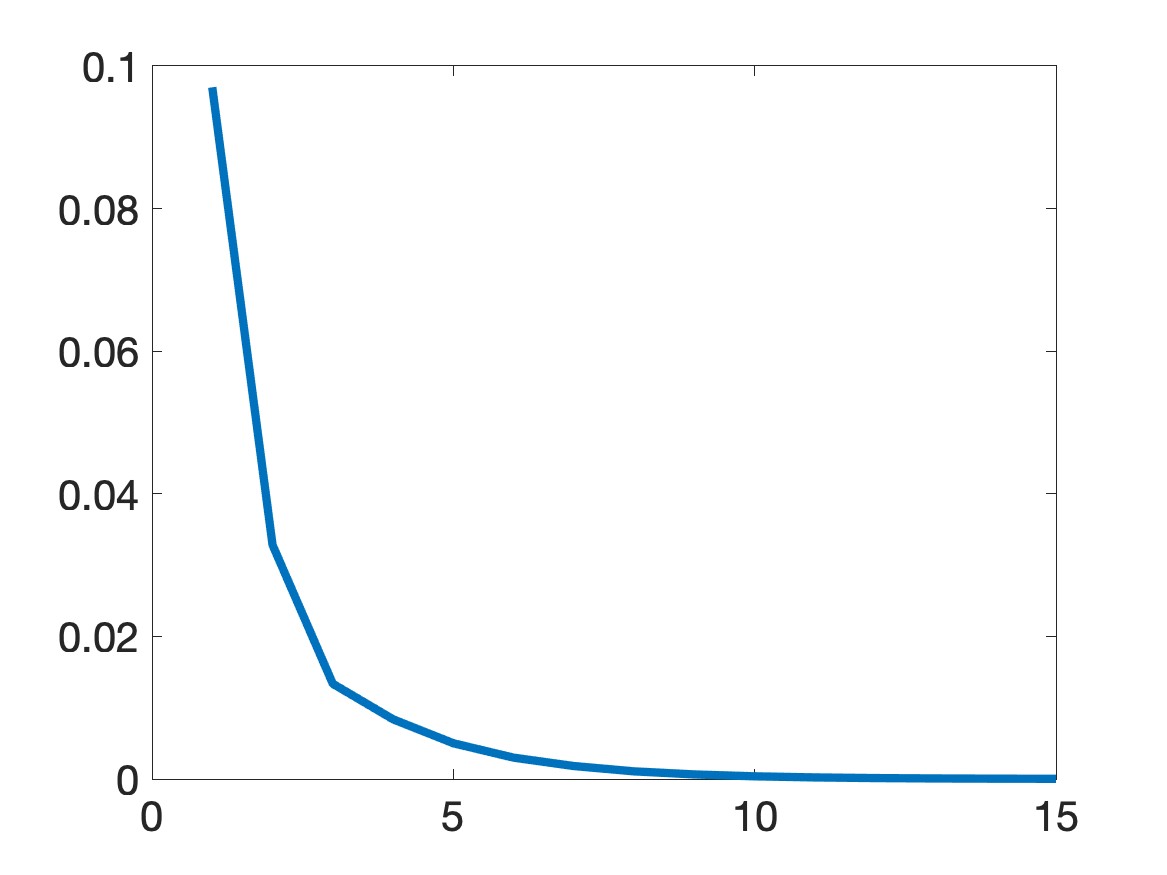}}
	\caption{\label{fig4} (a) and (b) The true and computed initial condition $f^{\rm true}$ and $f^{\rm comp}$ respectively. (c) The consecutive relative difference $\frac{|U_{n + 1} - U_n|_{L^{\infty}(\Omega)}}{\|U_n\|_{L^{\infty}(\Omega)}}$.
	The horizontal axis of this figure is the number of iteration $n$.
	 The boundary data used to reconstruct the function $f$ is corrupted with $\delta = 10\%$ noise.}
\end{figure}
  
Our algorithm works well for this test.
It is evident that both ``horizontal inclusions" are successfully identified. The peak value of the computed function $f$ inside the ``top" inclusion is 3.9845 (relative error 0.39\%).
The peak value of the computed function $f$ inside the ``bottom" inclusion is 3.25782 (relative error 8.59\%).

\begin{Remark}
	To emphasize that our method is independent of the unknown damping coefficient $a$, we use different $a$ in the tests above.
	\begin{enumerate}
		\item 
		The unknown function $a^{\rm true}$ in Test 1 is given by 
\[
	a^{\rm true}(x, y) = \left\{
	\begin{array}{ll}
		2 &\mbox{if } 0.15^2 < (x - 0.35)^2 + y^2 < 0.6^2\\
		1 &\mbox{otherwise},
	\end{array}
	\right.
	\quad
	\mbox{for all } (x, y) \in \Omega.
\]
\item The unknown function $a^{\rm true}$ in Test 2 is given by
\[
	a^{\rm true}(x, y) = |y^2 - x|
	\quad
	\mbox{for all } (x, y) \in \Omega.
\]
\item The unknown function $a^{\rm true}$ in Test 3 is given by
\[
	a^{\rm true}(x, y) = x^2
	\quad
	\mbox{for all } (x, y) \in \Omega.
\]
\item The unknown function $a^{\rm true}$ in Test 4 is given by
\[
	a^{\rm true}(x, y) = \left\{
	\begin{array}{ll}
		2e^{\frac{r^2}{r^2 - 1}} &\mbox{if } r < 1\\
		1 &\mbox{otherwise},
	\end{array}
	\right.
	\quad
	\mbox{for all } (x, y) \in \Omega.	
\]
where $r = r(x, y) = \sqrt{\frac{x^2}{0.5^2} + \frac{y^2}{0.25^2}}$.
	\end{enumerate}		
\end{Remark}
	
	Upon calculating the vector value $U^{\rm comp} = (u_1^{\rm comp}, \dots, u_N^{\rm comp})$ at Step \ref{s11}, one might intuitionally consider the problem of reconstructing the unknown coefficient $a(x, y)$, $(x, y) \in \Omega$, by combining equations \eqref{2,1} and \eqref{2.1}. More precisely, one could suggest the following formula
\begin{equation}
a(x, y) = -\frac{\big[\sum_{l = 1}^N u_l^{\rm comp}(x, y) \Psi_l'(0)\big]\big[\sum_{l = 1}^N u_l^{\rm comp}(x, y) \Psi_l(0)\big]}{\big|\sum_{l = 1}^N u_l^{\rm comp}(x, y) \Psi_l(0)\big|^2 +\eta^2}
\quad
\mbox{for all } (x, y) \in \Omega.
\label{4.6}
\end{equation}
However, we find that equation \eqref{4.6} is not universally successful. Our numerical observations suggest that its applicability is heavily contingent on the nature of the function $f^{\rm true}$. If $f^{\rm true}$ is smooth, equation \eqref{4.6} can reliably reconstruct the coefficient $a$. Conversely, if $f^{\rm true}$ lacks continuity, the formula presented in \eqref{4.6} fails.
In the next two tests, we show the reconstruction of both $f$ and $a$ when $f$ is smooth.

{\bf Test 5.}
In this test, we set 
\[
	f^{\rm true}(x, y) = y^2 - x + 5 
\]
and
\[
	a^{\rm true}(x, y) =
	\left\{
		\begin{array}{ll}
			0 &\mbox{if } \max\{\sqrt{2}|x - 0.4|, |y|\} > 0.5 \,\mbox{or } \max\{|x - 0.4|,|y|\}_1 < 0.12,\\
			2 &\mbox{otherwise,}
		\end{array}
	\right.
\]
for all $(x, y) \in \Omega$.
The true and reconstructed of these functions are displayed in Figure \ref{fig5}.

\begin{figure}[h!]
	\centering
	\subfloat[]{\includegraphics[width = .35\textwidth]{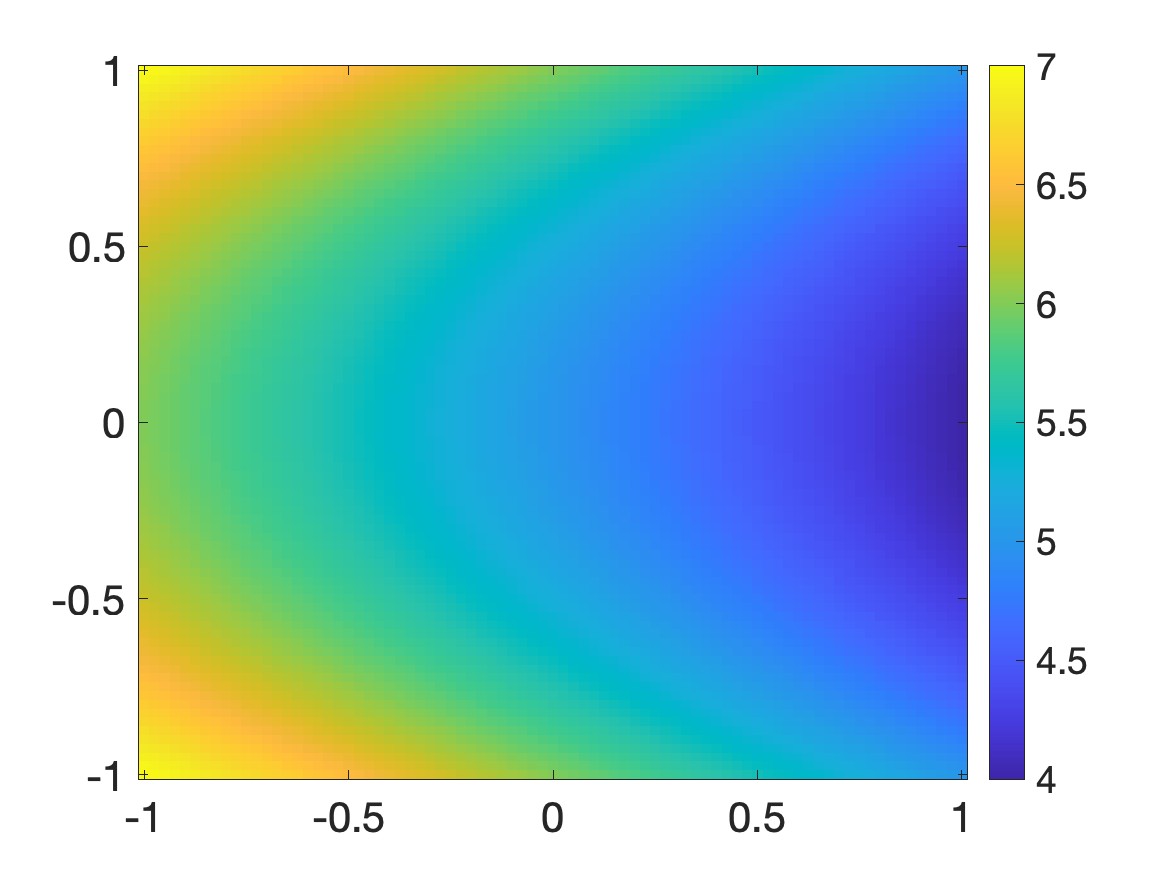}}
	\quad
	\subfloat[]{\includegraphics[width = .35\textwidth]{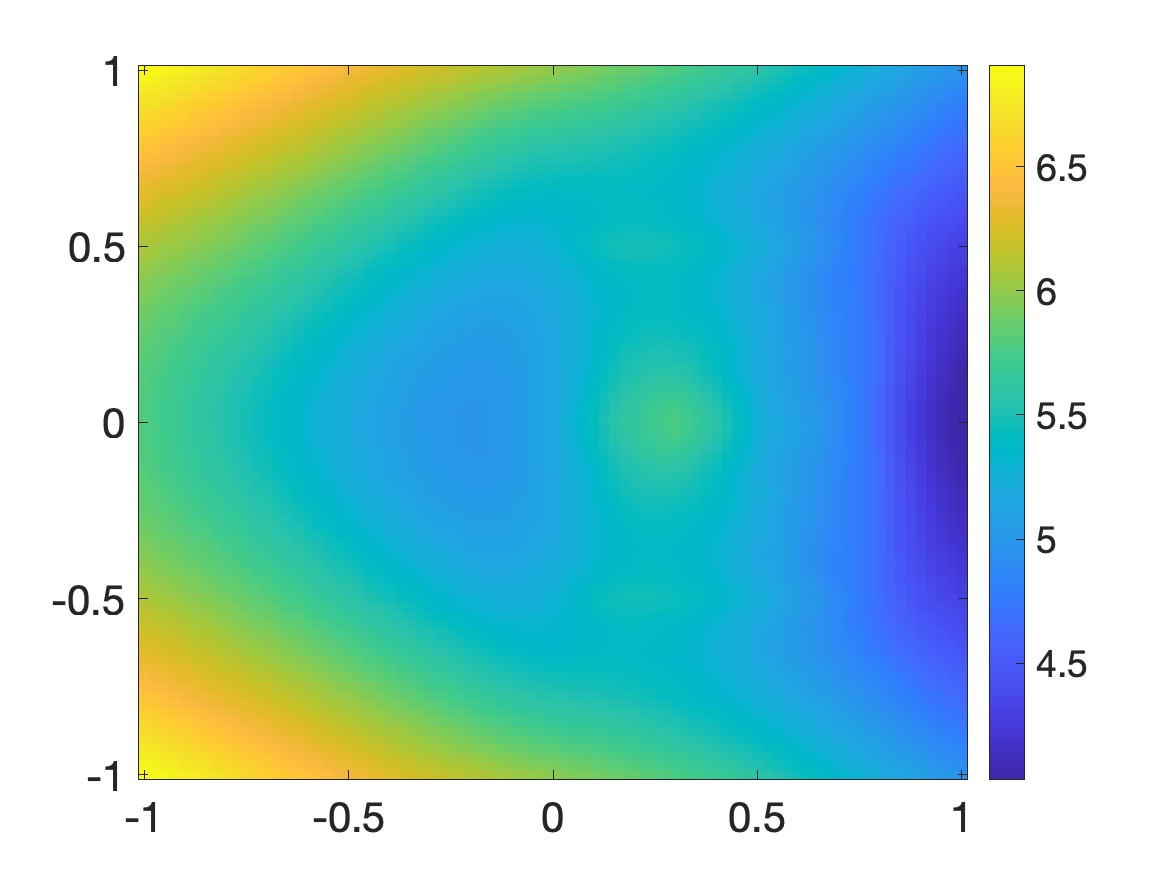}}
	
	\subfloat[]{\includegraphics[width = .35\textwidth]{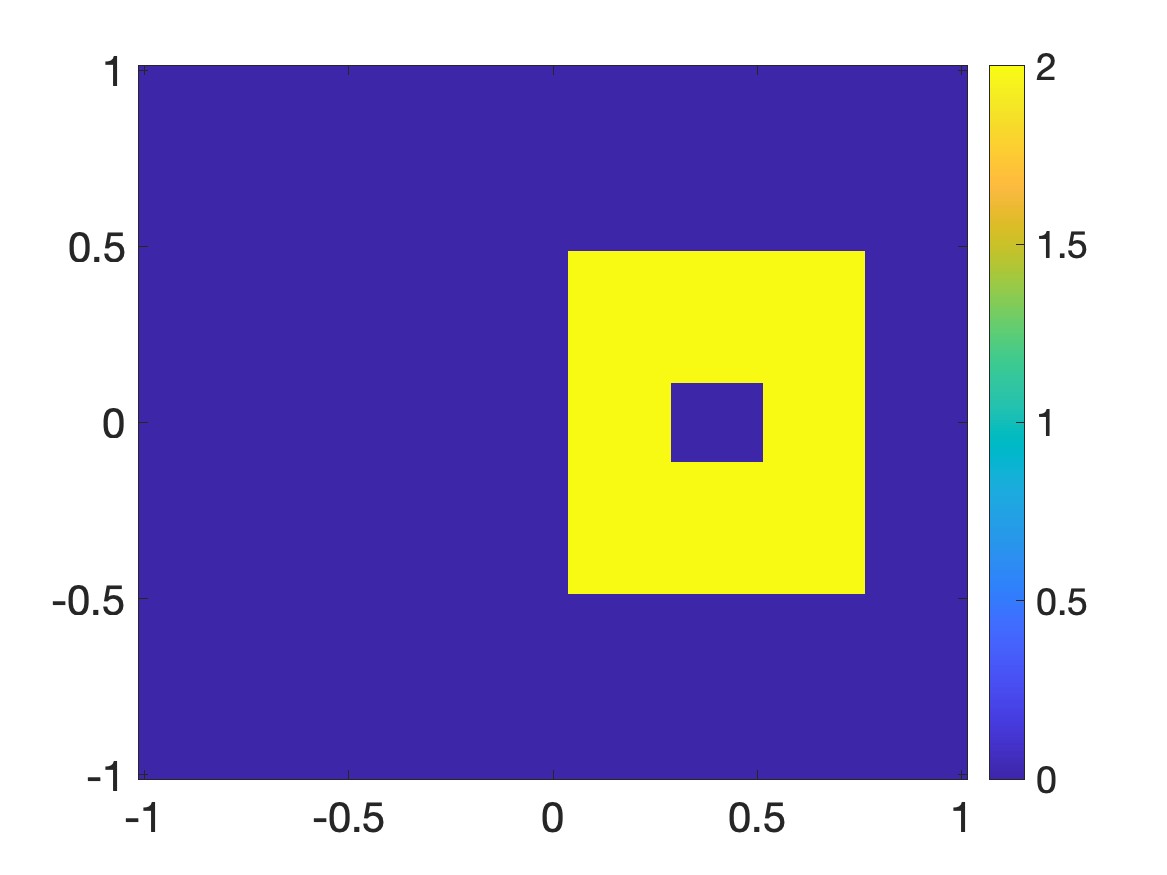}}
	\quad
	\subfloat[]{\includegraphics[width = .35\textwidth]{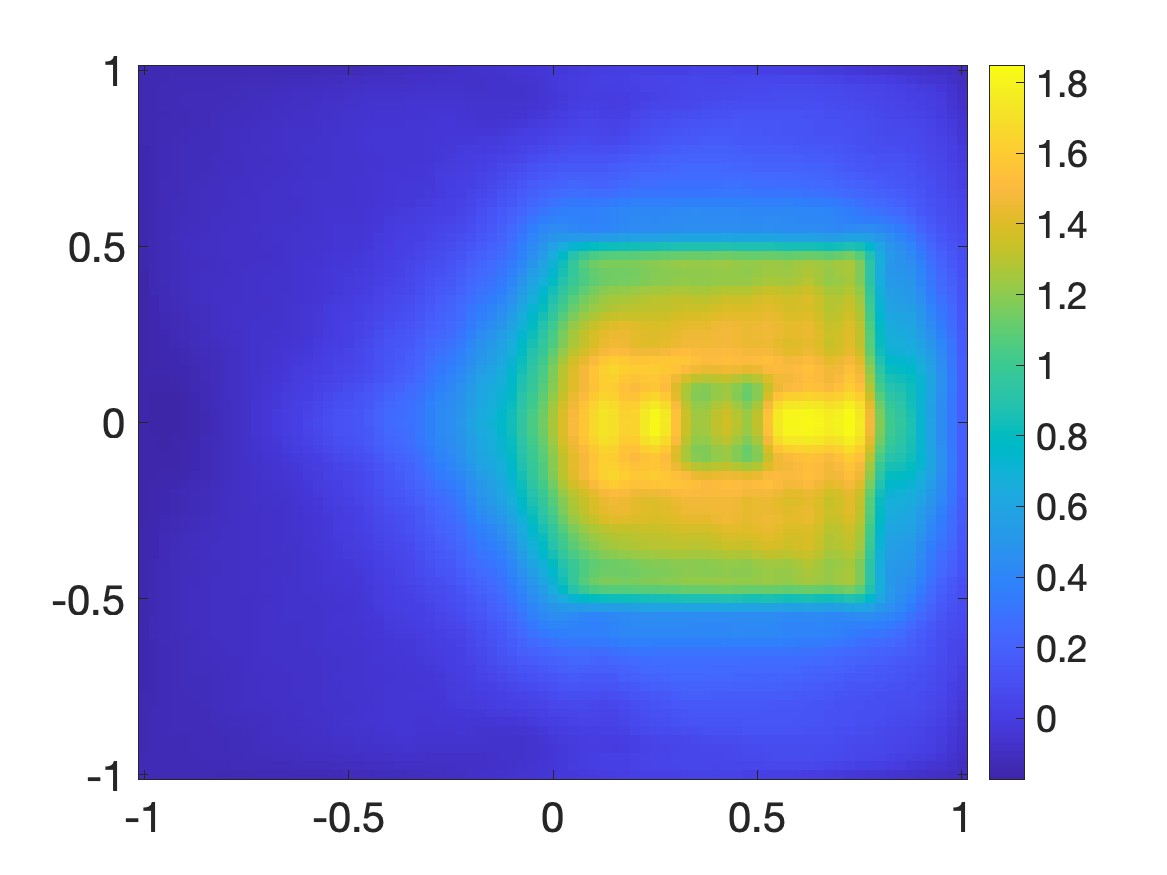}}
	\caption{\label{fig5}
		(a) and (b) the true and computed initial conditions $f$.
		(c) and (d) the true and computed damping coefficients $a$.
		The data incorporated in this test includes a noise level of 10\%.
	}	
\end{figure}

The relative error for the initial condition, calculated as $\frac{\|f^{\rm comp} - f^{\rm true}\|_{L^2(\Omega)}}{\|f^{\rm true}\|_{L^2(\Omega)}}$, is 5.47\%, which falls below the noise level. Although the $L^2$ relative error in calculating the damping coefficient $a$ is large, the reconstructed maximum value of $a$ within the inclusion resembling a square with a void is still considered acceptable. Inside the inclusion, the computed maximum value of $a^{\rm comp}$ is 1.8474, corresponding to a relative error of 7.63\%.

{\bf Test 5.}
In this test, we set 
\[
	f^{\rm true}(x, y) = x - y + 7 
\]
and
\[
	a^{\rm true}(x, y) =
	\left\{
		\begin{array}{ll}
			2 &\mbox{if } (x - 0.55)^2 + (y - 0.55)^2 < 0.4^2,\\
			4 &\mbox{if }  (x + 0.55)^2 + (y - 0.55)^2 < 0.4^2,\\
			3 &\mbox{if } (x + 0.55)^2 + (y + 0.55)^2 < 0.4^2,\\
			1 &\mbox{otherwise}
		\end{array}
	\right.
\]
for all $(x, y) \in \Omega$.
This test is interesting since we are solving a nonlinear problem when the values of both unknown $a$ and $f$ are high. The true and reconstructed of these functions are displayed in Figure \ref{fig6}.

\begin{figure}[h!]
	\centering
	\subfloat[]{\includegraphics[width = .35\textwidth]{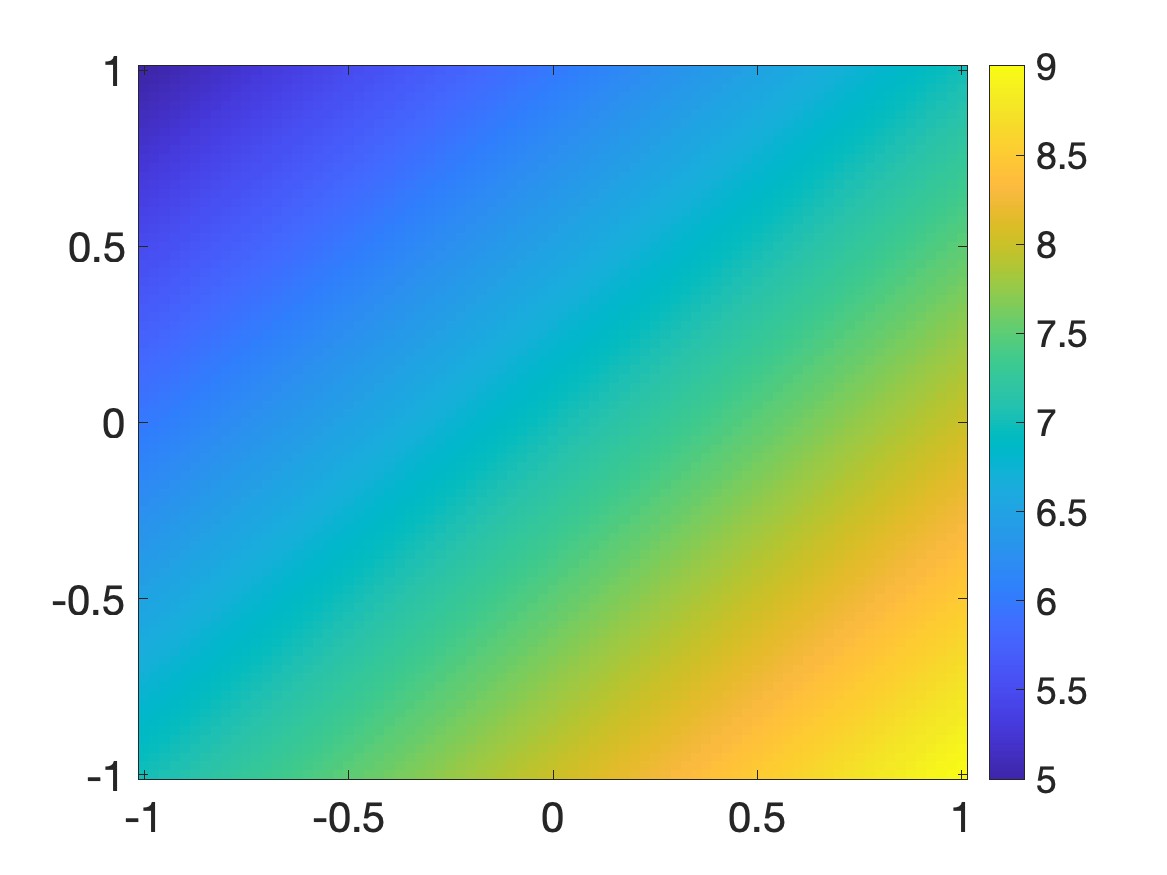}}
	\quad
	\subfloat[]{\includegraphics[width = .35\textwidth]{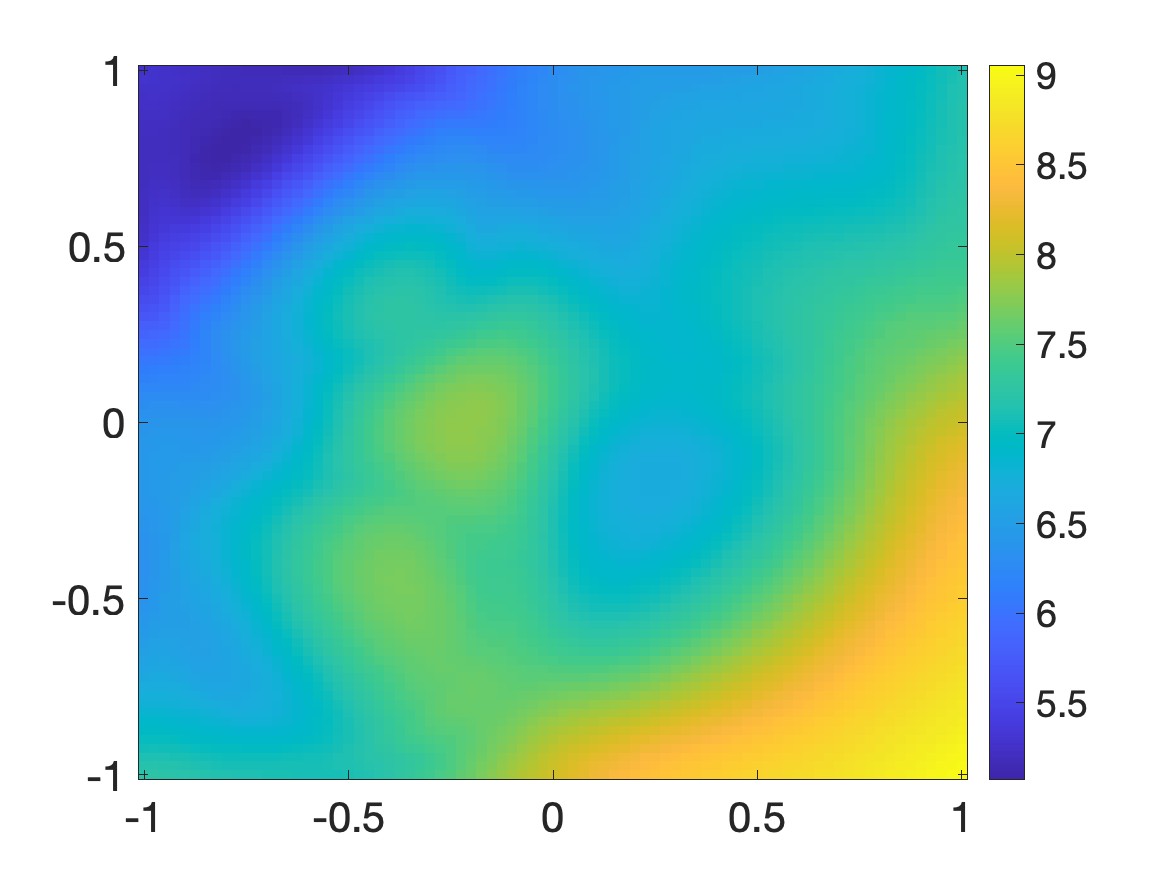}}
	
	\subfloat[]{\includegraphics[width = .35\textwidth]{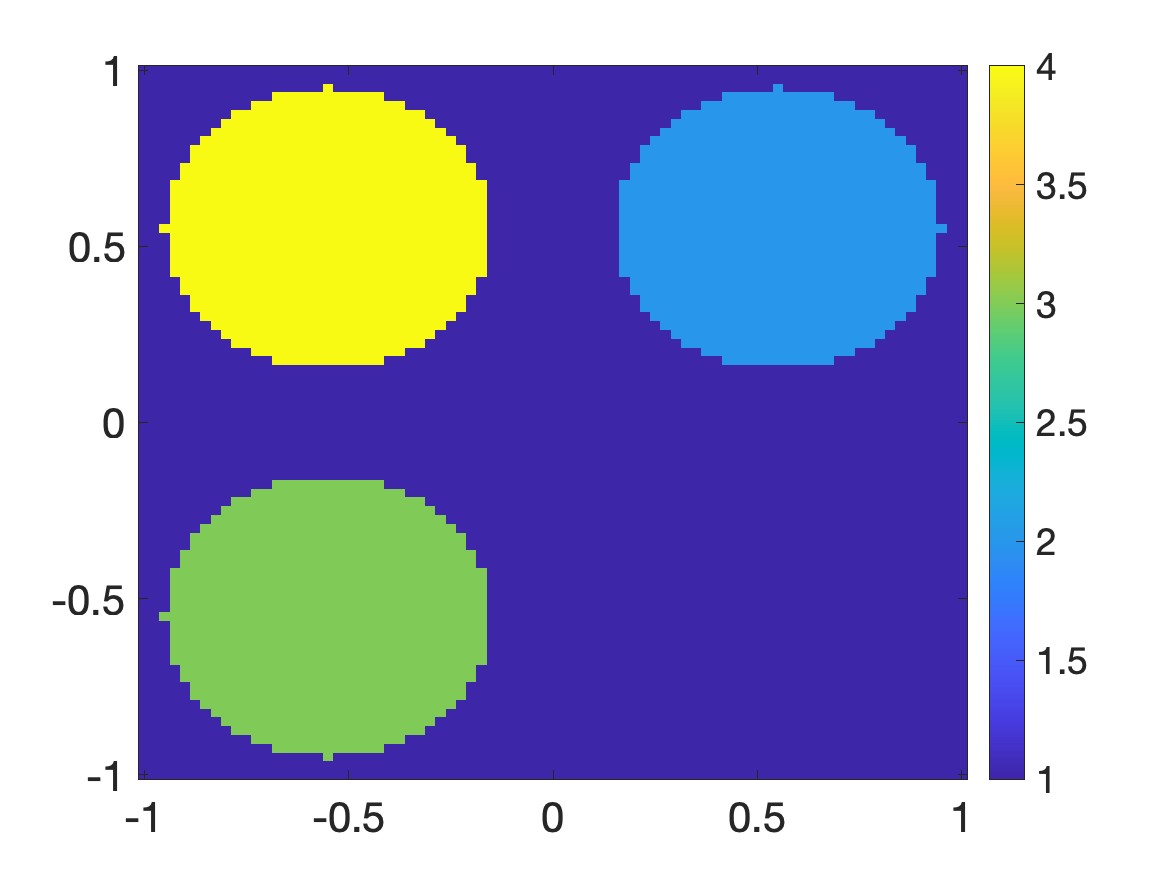}}
	\quad
	\subfloat[]{\includegraphics[width = .35\textwidth]{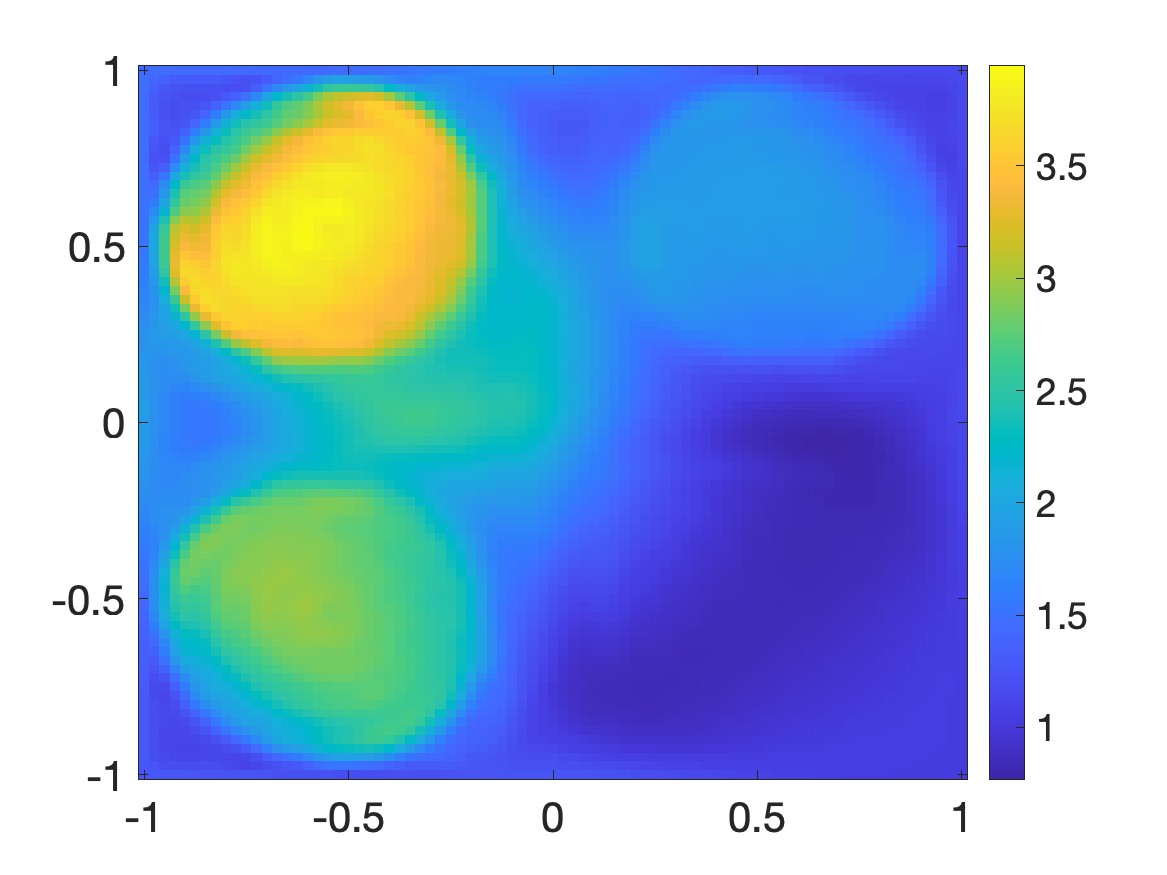}}
	\caption{\label{fig6}
		(a) and (b) the true and computed initial conditions $f$.
		(c) and (d) the true and computed damping coefficients $a$.
		The data incorporated in this test includes a noise level of 10\%.
	}	
\end{figure}

The relative error for the initial condition, calculated as $\frac{\|f^{\rm comp} - f^{\rm true}\|{L^2(\Omega)}}{\|f^{\rm true}\|{L^2(\Omega)}}$, is 5.37\%, which falls below the noise level. As in Test 5, the $L^2$ relative error in calculating the damping coefficient $a$ is large. However, the reconstructed maximum value of $a$ within each inclusion is still satisfactory. Inside the top left inclusion, the computed maximum value of $a^{\rm comp}$ is 3.9336 (relative error 1.66\%). Inside the top right inclusion, the computed maximum value of $a^{\rm comp}$ is 1.89312 (relative error 5.34\%). Inside the bottom left inclusion, the computed maximum value of $a^{\rm comp}$ is 2.9731 (relative error 0.9\%). 

%\section{Concluding remarks} \label{sec5}

 \section*{Acknowledgement}
The works of TTL and LHN were partially supported  by National Science Foundation grant DMS-2208159, and  by funds provided by the Faculty Research Grant program at UNC Charlotte Fund No. 111272,  and by
the CLAS small grant provided by the College of Liberal Arts \& Sciences, UNC Charlotte. 
The works of HP were supported in part by National Science Foundation grant DMS-2150179. LVN was partially supported by the National Science Foundation grant DMS-1616904.

%\bibliographystyle{plain}
%\bibliography{../../../../mybib}

\end{document}